\newtheorem*{thm*}{Theorem}
\theoremstyle{definition}
\newtheorem{teo}{Theorem}[section]
\newtheorem{lema}[teo]{Lemma}
\newtheorem{cor}[teo]{Corollary}
\definecolor{wwwwww}{rgb}{0.4,0.4,0.4}
\begin{document}
\thanks{This study was financed in part by the Coordena\c{c}\~ao de Aperfei\c{c}oamento de Pessoal de N\'ivel Superior - Brasil (CAPES) Finance Code 001}
\title[\resizebox{6.1in}{!}{Rigidity theorems for constant weighted mean curvature hypersurfaces}]{Rigidity theorems for constant weighted mean curvature hypersurfaces}

\author[Saul Ancari]{Saul Ancari}
\address{\sc Saul Ancari\\
Instituto de Matem\'atica e Estat\'istica, Universidade Federal Fluminense, Campus Gragoat\'a, Rua Alexandre Moura 8 - S\~ao Domingos\\
24210-200 Niter\'oi, Rio de Janeiro\\ Brazil}
\email{sa\_ancari@id.uff.br}

\author[Igor Miranda]{Igor Miranda}
\address{\sc Igor Miranda\\
Instituto de Matem\'atica e Estat\'istica, Universidade Federal Fluminense, Campus Gragoat\'a, Rua Alexandre Moura 8 - S\~ao Domingos\\
24210-200 Niter\'oi, Rio de Janeiro\\ Brazil}
\email{igor\_miranda@id.uff.br}

\subjclass[2010]{MSC 53C42 \and MSC 53C44}
\keywords{weighted mean curvature, polynomial volume growth, self-shrinkers, $\lambda$-hypersurfaces}

\begin{abstract}
In this article, we study hypersurfaces $\Sigma\subset \mathbb{R}^{n+1}$ with constant weighted mean curvature. Recently, Wei-Peng proved a rigidity theorem for CWMC hypersurfaces that generalizes Le-Sesum classification theorem for self-shrinker. More specifically, they showed that a complete CWMC hypersurface with polynomial volume growth, bounded norm of the second fundamental form and that satisfies $|A|^2H(H-\lambda)\leq H^2/2$ must either be a hyperplane or a generalized cylinder. We generalize this result by removing the bound condition on the norm of the second fundamental form. Moreover, we prove that under some conditions if the reverse inequality holds then the hypersurface must either be a hyperplane or a generalized cylinder. As an application of one of the results proved in this paper, we will obtain another version of the classification theorem obtained by the authors of this article, that is, we show that under some conditions, a complete CWMC hypersurface with $H\geq 0$ must either be a hyperplane or a generalized cylinder.
\end{abstract}

\maketitle

\section{Introduction}
One of the main interests in the study of the mean curvature flow theory is to understand possible singularities that the flow goes through. The singularity models for these flows can be associated to hypersurfaces that satisfy the following mean curvature condition
\[
H=\frac{\langle x , \nu \rangle}{2}
\]
where $H$, $x$ and $\nu$ stand for the mean curvature of $\Sigma$, the position vector in $\mathbb{R}^{n}$ and the unit normal vector of $\Sigma$, respectively. Such hypersurfaces are known as self-shrinkers. Another characterization of the self-shrinkers is that they are critical points of the weighted area functional
\stepcounter{thm}
\begin{eqnarray}\label{11}
F(\Sigma)=\int_\Sigma e^{-\frac{|x|^2}{4}}dv.
\end{eqnarray}

There is a great interest in studying two-sided smooth hypersurfaces $\Sigma\subset\mathbb{R}^{n+1}$ which are critical points of the functional (\ref{11}) for variations $G:(-\varepsilon,\varepsilon)\times\Sigma \rightarrow \mathbb{R}^{n+1}$ that preserve enclosed weighted volume. These variations can be represented by functions $u:\Sigma \rightarrow \mathbb{R}$ defined by 
\[
u(x)=\langle \partial_t G(0,x),\nu(x)\rangle
\]
such that $\int_\Sigma u\  e^{-|x|^2/4}dv =0$, where $\nu$ is the normal vector of $\Sigma$. It is well known that these hypersurfaces satisfy the following condition
\begin{eqnarray*}
H=\frac{\langle x, \nu\rangle}{2}+\lambda,
\end{eqnarray*}
where $\lambda \in \mathbb{R}$. Such hypersurfaces are known as $\lambda$-hypersurfaces or as constant weighted mean curvature hypersurfaces. Throughout this paper, whenever $\Sigma$ satisfies the mean curvature condition above, they will be called CWMC hypersurfaces. Self-shrinkers, hyperplanes, spheres and cylinders are some examples of CWMC hypersurfaces. The study of such hypersurfaces arises in geometry and probability as solutions to the Gaussian isoperimetric problem. These hypersurfaces were first studied by Cheng-Wei \cite{cheng2018complete} and McGonagle-Ross \cite{mcgonagle2015hyperplane}. Since then, there has been much interest about classification results for CWMC hypersurfaces, for instance \cite{sun2018compactness},\cite{guang2018gap},\cite{cheng2016rigidity}, etc. Le and Sesum \cite{le2011blow} proved a gap theorem showing that if a complete embedded self-shrinker $\Sigma\subset\mathbb{R}^{n+1}$ with polynomial volume growth ( there exist a constant $C>0$ and $r_0>0$ such that $V(B_r(0)\cap\Sigma)\leq Cr^\alpha$, for all $r\geq r_0$ and for some $\alpha>0$ ) and such that the norm of the second fundamental form satisfies $|A|^2< 1/2$, then $\Sigma$ should be a hyperplane. Cao and Li \cite{cao2013gap} extended this theorem to a more general result for arbitrary codimension proving that a complete embedded self-shrinker $\Sigma^n\subset \mathbb{R}^{n+p}$ with polynomial volume growth and $|A|^2\leq 1/2$ has to be a generalized cylinder. Later, Guang \cite{guang2018gap}, Cheng, Ogata and Wei \cite{cheng2016rigidity} proved rigidity theorems for CWMC that generalized Le-Sesum's theorem. Recently, Wei and Peng proved another generalization for CWMC hypersurfaces. More specifically, they showed the following result.
\begin{teo}\cite{wei2019note}
Let $\Sigma\subset \mathbb{R}^{n+1}$ be a complete embedded CWMC hypersurface with polynomial volume growth. If  the norm of the second fundamental form is bounded and\\
\[
|A|^2H(H-\lambda)\leq \frac{H^2}{2}
\]
then $\Sigma$ is either a hyperplane or a generalized cylinder $S^k_r(0)\times \mathbb{R}^{n-k}$, $1\leq k \leq n$. 
\end{teo}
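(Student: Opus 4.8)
The plan is to run the standard integral-estimate argument for self-shrinker-type rigidity, adapted to the CWMC (or $\lambda$-hypersurface) setting, but carried out with cutoff functions so that the boundedness of $|A|$ is never used — only the polynomial volume growth, which is exactly what makes the weighted integration by parts legitimate. Recall that for a CWMC hypersurface the drift Laplacian $\mathcal{L}=\Delta-\tfrac12\langle x,\nabla\cdot\rangle$ is self-adjoint with respect to the weighted measure $e^{-|x|^2/4}\,dv$, and one has the Simons-type identities $\mathcal{L}H=\left(\tfrac12-|A|^2\right)H+\lambda|A|^2$ and $\mathcal{L}|A|^2=\left(1-2|A|^2\right)|A|^2+2|\nabla A|^2+2\lambda\,\mathrm{tr}(A^3)$ (these are standard and presumably recalled in the preliminaries). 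First I would introduce a smooth compactly supported cutoff $\phi$ with $\phi\equiv 1$ on $B_r$, $\phi\equiv 0$ outside $B_{2r}$, $|\nabla\phi|\le C/r$, and compute $\int_\Sigma \phi^2 f\,\mathcal{L}f\,e^{-|x|^2/4}\,dv$ for the relevant test function $f$, integrating by parts to move derivatives onto $\phi$; the polynomial volume growth together with Cao–Li-type weighted $L^2$ bounds on $|A|$ and $H$ guarantees all the error terms go to $0$ as $r\to\infty$.

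The second step is to choose the right test function. The cleanest route is to work on the open set $\{H\neq 0\}$ with $f=\log|H|$ or equivalently to estimate $\mathcal{L}|H|$ and use the hypothesis $|A|^2H(H-\lambda)\le H^2/2$ directly: this hypothesis rearranges to $\left(\tfrac12-|A|^2\right)H^2+\lambda|A|^2 H\ge 0$, i.e. $H\cdot\mathcal{L}H\ge 0$ wherever the inequality is interpreted correctly, so $\mathcal{L}|H|\ge 0$ in the barrier sense on all of $\Sigma$. Combined with the weighted-$L^2$ integrability of $H$ (from polynomial volume growth) and a Calabi–Yau / Li–Schoen type argument — or more simply the weighted integration by parts $\int \phi^2 |H|\,\mathcal{L}|H|\,e^{-|x|^2/4} = -\int |\nabla(\phi|H|)|^2 e^{-\dots} + \int |H|^2|\nabla\phi|^2 e^{-\dots}$, with the left side $\ge 0$ — one forces $\nabla|H|\equiv 0$, so $|H|$ is constant, and moreover equality must hold in the differential inequality, i.e. $\left(\tfrac12-|A|^2\right)H^2+\lambda|A|^2H\equiv 0$ pointwise. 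If $H\equiv 0$ the hypersurface is a self-shrinker and Cao–Li's theorem (or Le–Sesum, since it is embedded with polynomial volume growth and, by the same argument applied to $|A|^2$, actually has $|A|^2\le 1/2$) finishes the case, giving a hyperplane.

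If $H$ is a nonzero constant, then from equality one gets $|A|^2 = \tfrac12 + \tfrac{\lambda}{H}|A|^2$ relating $|A|^2$ to $H$; feeding this back into the Simons identity for $|A|^2$, using $\mathcal{L}|A|^2=0$ (since $|A|^2$ is then forced constant too, as $H$ is constant and the hypersurface is CWMC with $\langle x,\nu\rangle = 2(H-\lambda)$ constant), yields $|\nabla A|^2 = -\lambda\,\mathrm{tr}(A^3) - \left(\tfrac12-|A|^2\right)|A|^2$, and one shows the right side forces $|\nabla A|\equiv 0$, so $\Sigma$ is isoparametric with parallel second fundamental form. A complete hypersurface in $\mathbb{R}^{n+1}$ with $\nabla A\equiv 0$, constant $H\neq 0$ and the CWMC equation $H=\tfrac12\langle x,\nu\rangle+\lambda$ must be a round cylinder $S^k_r(0)\times\mathbb{R}^{n-k}$ by the classification of such hypersurfaces (Lawson's theorem on hypersurfaces with parallel second fundamental form, combined with matching the radius to $\lambda$); this is the conclusion.

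The main obstacle I expect is making the differential inequality $\mathcal{L}|H|\ge 0$ rigorous across the nodal set $\{H=0\}$ and then promoting it, via the weighted cutoff integration by parts under only polynomial volume growth (no bound on $|A|$), to the pointwise vanishing $\nabla|H|\equiv 0$ and the equality case — this is precisely the step where Wei–Peng used boundedness of $|A|$ to control error terms, and removing it requires instead the weighted $L^2$-estimates for $|A|$ and $H$ that follow from polynomial volume growth (in the spirit of Cao–Li and Ding–Xin). Once the equality case is in hand, the geometric classification (constant $H$, parallel $A$, CWMC $\Rightarrow$ cylinder or hyperplane) is routine.
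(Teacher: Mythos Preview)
Your overall strategy is correct and is essentially the paper's approach: the hypothesis is exactly $H\,\mathcal{L}H\ge 0$, one multiplies by a cutoff $\varphi^2$, integrates by parts against the weight $e^{-|x|^2/4}$, and the error term $\int H^2|\nabla\varphi|^2 e^{-f}$ dies by polynomial volume growth (together with $H$ bounded, which here follows from $|A|$ bounded), yielding $\nabla H\equiv 0$; then one classifies. So the core idea matches.

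Two points where your write-up diverges from, and is weaker than, the paper's argument:

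\textbf{(a) The detour through $|H|$ is self-inflicted.} The paper works directly with $H$, not $|H|$ or $\log|H|$. Since the hypothesis already says $H\,\mathcal{L}H\ge 0$ as a pointwise inequality of smooth functions, the identity
\[
\int_\Sigma \varphi^2|\nabla H|^2 e^{-f}
= -\int_\Sigma \varphi^2 H\,\mathcal{L}H\, e^{-f}
- 2\int_\Sigma H\,\varphi\,\langle\nabla\varphi,\nabla H\rangle\, e^{-f}
\]
needs no barrier interpretation and no nodal-set analysis. Your ``main obstacle'' paragraph is therefore worrying about a difficulty that does not arise if you simply use $H$ instead of $|H|$.

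\textbf{(b) The step $|\nabla A|\equiv 0$ is genuinely incomplete.} Once $H$ is a nonzero constant, equation $\mathcal{L}H=0$ gives $|A|^2=\dfrac{H}{2(H-\lambda)}$, so $|A|^2$ is constant; from the weighted Simons identity you correctly get
\[
|\nabla A|^2=\Bigl(|A|^2-\tfrac12\Bigr)|A|^2+\lambda\,\mathrm{tr}A^3.
\]
But this single relation does \emph{not} force $|\nabla A|^2=0$: $\mathrm{tr}A^3$ is still unknown. The paper closes this by also invoking the \emph{classical} Simons equation $\tfrac12\Delta|A|^2=|\nabla A|^2-\langle A,\mathrm{Hess}\,H\rangle-H\,\mathrm{tr}A^3-|A|^4$, which with $H$ and $|A|$ constant gives the second relation
\[
|\nabla A|^2=H\,\mathrm{tr}A^3+|A|^4.
\]
Equating the two expressions solves $\mathrm{tr}A^3=\dfrac{|A|^2}{2(\lambda-H)}$, and substituting back (using $|A|^2=\dfrac{H}{2(H-\lambda)}$) yields $|\nabla A|^2=0$. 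Only then does Lawson's theorem apply. Your sentence ``one shows the right side forces $|\nabla A|\equiv 0$'' hides exactly this missing equation; as written it is a gap.

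A minor remark: in the $H\equiv 0$ case you appeal to Cao--Li/Le--Sesum via an unjustified claim that $|A|^2\le 1/2$. The paper's route is shorter: $H\equiv 0$ in $\mathcal{L}H+|A|^2(H-\lambda)=H/2$ gives $\lambda|A|^2=0$, so either $|A|\equiv 0$ (hyperplane) or $\lambda=0$ and $\Sigma$ is a minimal self-shrinker, hence a hyperplane.
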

In this paper, we will prove a similar result without the bound assumption on the norm of the second fundamental form. In fact, we prove the following theorem.
\begin{teo}\label{teointro}
Let $\Sigma\subset \mathbb{R}^{n+1}$ be a complete embedded CWMC hypersurface. If  the following properties hold:\\
(i) $|A|^2H(H-\lambda)\leq \frac{H^2}{2}$,\\
(ii) $\frac{1}{k^2}\int_{B^\Sigma_{2k}(p)\setminus B^\Sigma_k(p)} H^2e^{-\frac{|x|^2}{4}}\rightarrow 0$, when $k\rightarrow \infty$, for a fixed point $p\in \Sigma$,\\
then $\Sigma$ is either a hyperplane or a generalized cylinder $S^k_r(0)\times \mathbb{R}^{n-k}$, $1\leq k \leq n$.
\end{teo}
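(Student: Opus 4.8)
The plan is to convert hypothesis (i) into a subharmonicity statement for $H^2$ with respect to the drift Laplacian, to use hypothesis (ii) as a weighted Caccioppoli estimate forcing $H$ to be constant, and finally to classify the constant mean curvature CWMC hypersurfaces. Write $\mathcal{L}=\Delta-\tfrac12\langle x,\nabla\cdot\rangle$ for the drift Laplacian of $\Sigma$, which is self-adjoint with respect to $d\mu=e^{-|x|^2/4}\,dv$. Since $H-\lambda=\tfrac12\langle x,\nu\rangle$ on a CWMC hypersurface, applying $\mathcal{L}$ to the support function and using the Codazzi equation $\operatorname{div}A=\nabla H$ together with the structure equations gives $\mathcal{L}\langle x,\nu\rangle=H-|A|^2\langle x,\nu\rangle$, hence
\[
\mathcal{L}H \;=\; \tfrac12\,\mathcal{L}\langle x,\nu\rangle \;=\; \tfrac12 H-|A|^2(H-\lambda).
\]
Multiplying by $H$ and invoking (i) yields $H\,\mathcal{L}H=\tfrac12 H^2-|A|^2H(H-\lambda)\ge 0$, so that
\[
\mathcal{L}(H^2)\;=\;2H\,\mathcal{L}H+2|\nabla H|^2\;\ge\;2|\nabla H|^2\;\ge\;0;
\]
i.e. $H^2$ is $\mathcal{L}$-subharmonic on all of $\Sigma$.

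Next I would run a cutoff argument exploiting (ii). Since $\Sigma$ is complete its closed geodesic balls are compact, so one may fix $p\in\Sigma$ and choose Lipschitz functions $\varphi_k$ with $\varphi_k\equiv1$ on $B^\Sigma_k(p)$, $\varphi_k\equiv0$ off $B^\Sigma_{2k}(p)$, and $|\nabla\varphi_k|\le C/k$. Testing $\mathcal{L}(H^2)\ge 2|\nabla H|^2$ against $\varphi_k^2\,d\mu$ and integrating by parts (no boundary term, by compact support), then applying Young's inequality to absorb the $\varphi_k^2|\nabla H|^2$ term, gives
\[
\int_\Sigma \varphi_k^2\,|\nabla H|^2\,d\mu \;\le\; 4\int_\Sigma H^2\,|\nabla\varphi_k|^2\,d\mu \;\le\; \frac{4C^2}{k^2}\int_{B^\Sigma_{2k}(p)\setminus B^\Sigma_k(p)} H^2 e^{-|x|^2/4}\,dv .
\]
By (ii) the right-hand side tends to $0$, so letting $k\to\infty$ forces $\int_\Sigma|\nabla H|^2\,d\mu=0$; thus $\nabla H\equiv0$ and $H\equiv H_0$ is constant.

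It then remains to classify complete CWMC hypersurfaces with constant mean curvature. In this case $\langle x,\nu\rangle=2(H_0-\lambda)=:c$ is constant, so differentiating gives $A(x^\top)=0$, and computing $\Delta\langle x,\nu\rangle$ via Codazzi gives $0=H_0-c\,|A|^2$. If $c=0$ then $H_0=0$, $\lambda=0$ and $\langle x,\nu\rangle\equiv0$, so the position vector is everywhere tangent to $\Sigma$; then $\Sigma$ is invariant under dilations, and a complete smooth cone is a hyperplane through the origin. If $c\ne0$ then $|A|^2=H_0/c$ is a nonnegative constant: for $H_0=0$ this makes $\Sigma$ totally geodesic, i.e.\ a hyperplane; for $H_0\ne0$ (so $|A|^2>0$) one feeds the facts that $|A|^2$ and $H$ are constant and that $x^\top$ is a null direction of $A$ into Simons' identity $\tfrac12\Delta|A|^2=|\nabla A|^2+H_0\operatorname{tr}(A^3)-|A|^4$ to conclude that $\Sigma$ is a round sphere or a generalized cylinder $S^k_r(0)\times\mathbb{R}^{n-k}$ (equivalently, one invokes the known classification of constant mean curvature CWMC hypersurfaces).

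The heart of the proof, and the place where the new hypothesis (ii) enters, is the Caccioppoli step, which is essentially formal once the identity for $\mathcal{L}H$ is available; I expect the main obstacle to be the final step, namely excluding exotic constant mean curvature CWMC hypersurfaces now that the bound on $|A|$ has been dropped. Concretely, the difficulty is to deduce $\nabla A\equiv0$ (or direct geometric rigidity) from "$|A|^2$ and $H$ constant, $x^\top\in\ker A$": one must control the sign of $\operatorname{tr}(A^3)$ relative to $|A|^4$ in Simons' identity, which requires a careful eigenvalue analysis of the shape operator or an appeal to an already established classification. A secondary point needing care is verifying the identity $\mathcal{L}H=\tfrac12 H-|A|^2(H-\lambda)$ with the paper's sign conventions and checking that all integrations by parts have vanishing boundary terms without assuming $\Sigma$ proper, which is exactly where completeness (through compactness of closed balls) is used.
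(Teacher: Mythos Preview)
Your argument for the first half---deriving $\mathcal{L}H=\tfrac12 H-|A|^2(H-\lambda)$, converting hypothesis~(i) into $H\,\mathcal{L}H\ge 0$, and running the cutoff/Caccioppoli estimate against hypothesis~(ii) to force $\nabla H\equiv 0$---is exactly what the paper does (the paper phrases it via $\int\varphi^2|\nabla H|^2e^{-f}=-\int\varphi^2 H\mathcal{L}H\,e^{-f}-\dots$ rather than $\mathcal{L}(H^2)$, but this is cosmetic).

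Where you diverge is the final classification step, and your self-diagnosis of the obstacle is accurate. The paper does \emph{not} try to push through with only the classical Simons identity and the extra information $A(x^\top)=0$; instead it pairs the classical Simons identity with the CWMC Simons-type equation~(\ref{2}),
\[
\mathcal{L}|A|+\Big(|A|^2-\tfrac12\Big)|A|=\frac{|\nabla A|^2-|\nabla|A||^2}{|A|}-\frac{\lambda\,\mathrm{tr}A^3}{|A|}.
\]
With $H$ constant and $H\neq 0$ one has $|A|^2=\dfrac{H}{2(H-\lambda)}$ constant, so (\ref{2}) reduces to $|\nabla A|^2=(|A|^2-\tfrac12)|A|^2+\lambda\,\mathrm{tr}A^3$, while the classical Simons identity (with $\mathrm{Hess}\,H=0$) gives $|\nabla A|^2=H\,\mathrm{tr}A^3+|A|^4$. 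Subtracting solves for $\mathrm{tr}A^3=\dfrac{|A|^2}{2(\lambda-H)}$, and substituting back yields $|\nabla A|^2=0$ purely algebraically---no eigenvalue analysis, no appeal to $A(x^\top)=0$. Lawson then finishes. So the ``careful eigenvalue analysis'' you anticipated is avoided precisely by using the second (drifted) Simons equation, which is the ingredient your outline is missing. Your fallback of citing the Cheng--Wei classification of constant mean curvature CWMC hypersurfaces is legitimate (the paper records it as Corollary~\ref{cor1}), but the proof of Theorem~\ref{teointro} itself is self-contained in this way.
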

\rem: Notice that if $\Sigma$ has polynomial volume growth, then the condition (ii) is satisfied.
\rem: The polynomial volume growth condition appears in most of the classification theorems and it is needed since there are self-shrinkers that do not satisfy this condition, for instance see \cite{halldorsson2012self}.\\

The authors of this paper proved the following classification theorem.
\begin{teo}\cite{ancari2019volume}
Let $\Sigma\subset \mathbb{R}^{n+1}$ be a complete embedded CWMC hypersurface. If $\Sigma$ satisfies the following properties:\\
(i) $H-\lambda\geq 0$,\\
(ii) $\lambda\left( \text{tr}A^3(H-\lambda)+\frac{|A|^2}{2}\right)\leq 0$,\\
(iii) $\frac{1}{k^2}\int_{B^\Sigma_{2k}(p)\setminus B^\Sigma_k(p)} |A|^2e^{-\frac{|x|^2}{4}}\rightarrow 0$, when $k\rightarrow \infty$, for a fixed point $p\in \Sigma$,\\
then $\Sigma$ must be either a hyperplane or $S^{k}_r(0)\times \mathbb{R}^{n-k}$, $1\leq k \leq n$.
\end{teo}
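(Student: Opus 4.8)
The plan is to run the drift-Laplacian maximum-principle/Caccioppoli machinery. Write $\mathcal{L}=\Delta-\tfrac12\langle x,\nabla(\cdot)\rangle$, the operator that is self-adjoint with respect to $e^{-|x|^2/4}\,dv$, and recall the two standard identities for CWMC hypersurfaces: $\mathcal{L}H=\bigl(\tfrac12-|A|^2\bigr)H+\lambda|A|^2$ and the Simons-type identity $\tfrac12\,\mathcal{L}|A|^2=-|\nabla A|^2+\bigl(\tfrac12-|A|^2\bigr)|A|^2+\lambda\operatorname{tr}A^3$. I would first dispose of the case $H\equiv\lambda$: then $\langle x,\nu\rangle=2(H-\lambda)\equiv 0$, and the first identity gives $0=\tfrac\lambda2$, so $\lambda=0=H$; thus $\Sigma$ is a minimal self-shrinker whose position vector is everywhere tangent, i.e.\ a smooth complete minimal cone, hence a hyperplane through the origin. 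So from now on $H\not\equiv\lambda$; since CWMC hypersurfaces solve an elliptic equation with analytic coefficients they are real-analytic, so using hypothesis (i) the set $\{H>\lambda\}$ is open and dense.

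The heart of the proof is to combine the two identities into a usable differential inequality for the nonnegative auxiliary function $w:=(H-\lambda)|A|^2=\tfrac12\langle x,\nu\rangle|A|^2$. Expanding $\mathcal{L}w$ with the two identities produces a term $-2(H-\lambda)|\nabla A|^2$, gradient cross-terms, and a zeroth-order part that contains $\lambda(H-\lambda)\operatorname{tr}A^3$; hypothesis (ii), which reads exactly $\lambda\bigl((H-\lambda)\operatorname{tr}A^3+\tfrac12|A|^2\bigr)\le 0$, is precisely what is needed to absorb that term, and together with (i) it turns the identity for $\mathcal{L}w$ into a one-sided inequality of the shape $\mathcal{L}w\le -c\,(H-\lambda)|\nabla A|^2+(\text{gradient cross-terms})$. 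One then tests this against $\eta_k^2\,e^{-|x|^2/4}$, where $\eta_k$ is a cut-off equal to $1$ on $B^\Sigma_k(p)$, supported in $B^\Sigma_{2k}(p)$, with $|\nabla\eta_k|\le C/k$, integrates by parts, and estimates the resulting error terms by a constant multiple of $\tfrac1{k^2}\int_{B^\Sigma_{2k}(p)\setminus B^\Sigma_k(p)}|A|^2 e^{-|x|^2/4}\,dv$, which tends to $0$ by hypothesis (iii). Letting $k\to\infty$ gives $(H-\lambda)|\nabla A|^2\equiv 0$, hence $\nabla A\equiv0$ on the dense set $\{H>\lambda\}$ and therefore everywhere: the second fundamental form of $\Sigma$ is parallel.

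Once $\nabla A\equiv0$, a classical theorem on hypersurfaces of $\mathbb{R}^{n+1}$ with parallel second fundamental form (equivalently, the classification of Euclidean isoparametric hypersurfaces as hyperplanes, round spheres, and spherical cylinders), together with completeness, forces $\Sigma$ to be a hyperplane, a round sphere, or a product $S^k_r(q)\times\mathbb{R}^{n-k}$; and the CWMC equation $H=\langle x,\nu\rangle/2+\lambda$ then forces the center $q$ of the spherical factor to be the origin, since $\langle x,\nu\rangle=2(H-\lambda)$ is constant on $\Sigma$ but would vary along that factor if $q\neq0$. This yields the claimed trichotomy. The step I expect to be the main obstacle is the algebraic one in the second paragraph: recognizing that $w=(H-\lambda)|A|^2$ (and not $|A|^2$ itself) is the right quantity, and checking that after inserting (i) and (ii) every zeroth-order term has the correct sign while the gradient cross-terms are genuinely controlled by the Caccioppoli estimate; the real-analyticity input used to spread $\nabla A\equiv 0$ off $\{H>\lambda\}$ is a smaller but necessary point.
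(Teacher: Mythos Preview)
Your Simons-type identity has the wrong sign. Since $\mathcal{L}=\Delta-\tfrac12\langle x,\nabla\cdot\rangle$ and $\tfrac12\Delta|A|^2=|\nabla A|^2+\langle A,\Delta A\rangle$, the correct formula (as used in the paper) is
\[
\tfrac12\,\mathcal{L}|A|^2=+|\nabla A|^2+\Bigl(\tfrac12-|A|^2\Bigr)|A|^2-\lambda\,\mathrm{tr}\,A^3,
\]
with $+|\nabla A|^2$, not $-|\nabla A|^2$. Consequently, for your choice $w=(H-\lambda)|A|^2$ one computes $\mathcal{L}w=+2(H-\lambda)|\nabla A|^2+\cdots$, and with $H-\lambda\ge 0$ this term has exactly the wrong sign for the Caccioppoli argument you sketch: testing against $\eta_k^2e^{-f}$ cannot force $(H-\lambda)|\nabla A|^2\equiv 0$. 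In addition, after using (ii) the remaining zeroth-order part of $\mathcal{L}w$ still contains terms such as $\tfrac{3\lambda}{2}|A|^2$ and $3(\tfrac12-|A|^2)w$ with no definite sign, so (ii) is not ``precisely what is needed'' for this auxiliary function.

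The argument actually runs in the opposite direction and exploits the positive sign. One works with the \emph{ratio} $w=v/u$, where $u=H-\lambda>0$ and $v=\sqrt{|A|^2+\varepsilon}$; Kato's inequality and hypothesis (ii) then give a \emph{lower} bound of the form $\mathcal{L}w\ge -\tfrac{\varepsilon H}{2vu^2}-2\langle\nabla w,\nabla\log u\rangle$. Testing with $\varphi=\psi u$ (this substitution is what kills the $\nabla\log u$ term) and using (iii) yields $\nabla w\equiv 0$, i.e.\ $|A|=C(H-\lambda)$. This does \emph{not} yet give $\nabla A\equiv 0$: one must feed $|A|=C(H-\lambda)$ back into the two expressions for $\mathcal{L}|A|$ to obtain the Kato equality $|\nabla A|^2=|\nabla|A||^2$, and then a pointwise eigenvalue analysis (Codazzi plus a rank argument) is needed to upgrade this to $\nabla A\equiv 0$ before invoking Lawson's classification. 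So both the choice of auxiliary function and the hoped-for one-step passage to $\nabla A\equiv 0$ are where your proposal breaks.
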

An application of Theorem \ref{teointro} is another version of the theorem above.
\begin{teo}\label{AM19}
Let $\Sigma\subset \mathbb{R}^{n+1}$ be a complete embedded CWMC hypersurface. If $\Sigma$ satisfies the following properties:\\
(i) $H\geq 0$,\\
(ii) $\lambda\left( \text{tr}A^3(H-\lambda)+\frac{|A|^2}{2}\right)\leq 0$,\\
(iii) $\frac{1}{k^2}\int_{B^\Sigma_{2k}(p)\setminus B^\Sigma_k(p)} |A|^2e^{-\frac{|x|^2}{4}}\rightarrow 0$, when $k\rightarrow \infty$, for a fixed point $p\in \Sigma$,\\
then $\Sigma$ must be either a hyperplane or $S^{k}_r(0)\times \mathbb{R}^{n-k}$, $1\leq k \leq n$.
\end{teo}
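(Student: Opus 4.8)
The plan is to deduce Theorem~\ref{AM19} from Theorem~\ref{teointro} together with the authors' earlier classification \cite{ancari2019volume}, via a short dichotomy on the sign of $H-\lambda$. Two observations set this up. First, hypotheses (ii) and (iii) of Theorem~\ref{AM19} are word for word the second and third hypotheses of \cite{ancari2019volume}; consequently, as soon as $H-\lambda\ge 0$ holds on all of $\Sigma$, the conclusion is immediate from that theorem. Second, since $H^2\le n|A|^2$ pointwise by Cauchy--Schwarz applied to the principal curvatures, hypothesis (iii) of Theorem~\ref{AM19} yields
\[
\frac{1}{k^2}\int_{B^\Sigma_{2k}(p)\setminus B^\Sigma_k(p)}H^2e^{-\frac{|x|^2}{4}}\ \le\ \frac{n}{k^2}\int_{B^\Sigma_{2k}(p)\setminus B^\Sigma_k(p)}|A|^2e^{-\frac{|x|^2}{4}}\ \longrightarrow\ 0,
\]
which is precisely hypothesis (ii) of Theorem~\ref{teointro}. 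So it will be enough to arrange, in each case, that \emph{either} $H-\lambda\ge 0$ everywhere \emph{or} hypothesis (i) of Theorem~\ref{teointro} holds, namely $|A|^2H(H-\lambda)\le H^2/2$.

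If $\lambda\le 0$ there is nothing to do, since $H\ge 0\ge\lambda$ gives $H-\lambda\ge 0$ and \cite{ancari2019volume} applies. Assume therefore $\lambda>0$. The crucial --- and I expect the only substantive --- step is that the level set $\{H=\lambda\}$ is then \emph{empty}: if $H(q)=\lambda$, hypothesis (ii) of Theorem~\ref{AM19} reads $\lambda\bigl(\text{tr}A^3(q)\cdot 0+\tfrac12|A|^2(q)\bigr)\le 0$, hence $|A|^2(q)=0$ because $\lambda>0$, hence all principal curvatures vanish at $q$ and $H(q)=0\neq\lambda$, a contradiction. Since $\Sigma$ is connected, $\{H>\lambda\}$ and $\{H<\lambda\}$ are complementary open sets with union $\Sigma$, so exactly one of them is $\Sigma$ (if $\Sigma$ is disconnected, run the argument componentwise), and I would treat the two subcases in turn.

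If $H>\lambda$ throughout $\Sigma$, then $H-\lambda\ge 0$ and all three hypotheses of \cite{ancari2019volume} hold, so $\Sigma$ is a hyperplane or a generalized cylinder. If instead $H<\lambda$ throughout $\Sigma$, then $H\ge 0$ and $H-\lambda<0$ give $|A|^2H(H-\lambda)\le 0\le H^2/2$ everywhere; combined with the weighted-integral bound displayed above this verifies both hypotheses of Theorem~\ref{teointro}, and again $\Sigma$ is a hyperplane or a generalized cylinder. In every case the conclusion follows. The routine parts here are the Cauchy--Schwarz bound feeding hypothesis (ii) of Theorem~\ref{teointro} and the trivial pointwise sign estimate on $\{H<\lambda\}$; the one place where a genuine idea enters is the emptiness of $\{H=\lambda\}$, which is what makes the dichotomy --- and hence the reduction to the two already-proved theorems --- possible.
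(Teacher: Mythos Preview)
Your proposal is correct and follows essentially the same approach as the paper: the key step---showing $\{H=\lambda\}$ is empty when $\lambda>0$ via hypothesis~(ii)---is identical, and the resulting dichotomy is resolved exactly as the paper does, invoking Theorem~\ref{teointro} (the paper uses its Corollary~\ref{l2}) on $\{H<\lambda\}$ and the result of \cite{ancari2019volume} on $\{H-\lambda\ge 0\}$. The only difference is that the paper reproduces the proof of \cite{ancari2019volume} in full for self-containedness, whereas you cite it; your handling of the case $\lambda\le 0$ is also slightly cleaner, since $H\ge 0\ge\lambda$ gives $H-\lambda\ge 0$ directly without the maximum-principle detour.
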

Finally, our third main result is the following.
\begin{teo}\label{teointro2}Let $\Sigma\subset \mathbb{R}^{n+1}$ be a complete embedded CWMC hypersurface. If the following conditions are satisfied:\\
(i) $|A|^2(H-\lambda)\geq \frac{H}{2}$,\\
(ii) $\frac{1}{k^2}\int_{B^\Sigma_{2k}(p)\setminus B^\Sigma_k(p)} e^{-\frac{|x|^2}{4}}\rightarrow 0$, when $k\rightarrow \infty$, for a fixed point $p\in \Sigma$,\\
then $\Sigma$ either is a hyperplane or a generalized cylinder $S^k_r(0)\times \mathbb{R}^{n-k}$, $1\leq k \leq n$.
\end{teo}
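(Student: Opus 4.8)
The plan follows the same pattern as the rigidity theorems of this paper: an evolution identity that turns hypothesis~(i) into a differential inequality for $H$, together with a cut-off estimate that uses hypothesis~(ii) to kill the boundary terms; the new feature is that one works with $\log H$ rather than with $H$ itself. First I would recall the drift Laplacian $\mathcal{L}=\Delta_\Sigma-\tfrac12\langle x,\nabla\,\cdot\,\rangle$, which is self-adjoint with respect to $d\mu=e^{-|x|^2/4}\,dv$, and the standard evolution identity for CWMC hypersurfaces,
\[
\mathcal{L}H=\tfrac12 H-|A|^2(H-\lambda).
\]
With it, hypothesis~(i) reads exactly $\mathcal{L}H\le 0$, i.e. $H$ is $\mathcal{L}$-superharmonic. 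Since $\Sigma$ is complete, closed balls are compact and the strong maximum principle applies: if $H$ attains an interior minimum then $H$ is constant, and if $H\equiv0$ then $\langle x,\nu\rangle=2(H-\lambda)$ is constant, which (see below) makes $\Sigma$ a hyperplane. So the substantive case is $H\not\equiv0$; after settling the sign of $H$ — the delicate point, discussed last — I would reduce to $H>0$ on all of $\Sigma$.

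Assuming $H>0$, set $f=\log H$. Using $\mathcal{L}H\le0$ one gets the self-improving inequality $\mathcal{L}f=\frac{\mathcal{L}H}{H}-|\nabla f|^2\le-|\nabla f|^2$. Fix $p\in\Sigma$ and let $\varphi_k$ be cut-offs with $\varphi_k\equiv1$ on $B^\Sigma_k(p)$, $\operatorname{supp}\varphi_k\subset B^\Sigma_{2k}(p)$ and $|\nabla\varphi_k|\le C/k$. Multiplying the inequality by $\varphi_k^2$, integrating against $d\mu$, integrating by parts (no boundary terms, since $\varphi_k$ is compactly supported) and applying Cauchy--Schwarz gives
\[
\int_\Sigma|\nabla f|^2\varphi_k^2\,d\mu\le 2\Big(\int_\Sigma\varphi_k^2|\nabla f|^2\,d\mu\Big)^{1/2}\Big(\int_\Sigma|\nabla\varphi_k|^2\,d\mu\Big)^{1/2},
\]
whence $\int_\Sigma\varphi_k^2|\nabla f|^2\,d\mu\le 4\int_\Sigma|\nabla\varphi_k|^2\,d\mu\le\frac{4C^2}{k^2}\int_{B^\Sigma_{2k}(p)\setminus B^\Sigma_k(p)}e^{-|x|^2/4}\,dv$. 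By hypothesis~(ii) the right-hand side tends to $0$, so letting $k\to\infty$ yields $\int_\Sigma|\nabla f|^2\,d\mu=0$; hence $\nabla H\equiv0$ and $H$ is a positive constant. It is precisely because one uses $\log H$ that the error term reduces to $\int e^{-|x|^2/4}$, the quantity controlled by~(ii).

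Once $H$ equals a constant $c$, the evolution identity forces equality in~(i), $|A|^2(H-\lambda)=\tfrac12 H$. If $c=0$ we are in the hyperplane case already noted. If $c\neq0$ then $\lambda\neq c$, so $|A|^2=\frac{c}{2(c-\lambda)}$ is constant and $\langle x,\nu\rangle=2(c-\lambda)$ is a nonzero constant. Here I would invoke the rigidity of complete embedded CWMC hypersurfaces with $\langle x,\nu\rangle$ (equivalently, with $|A|^2$ and $H$) constant: differentiating $\langle x,\nu\rangle=\text{const}$ gives $A(x^\top)=0$, so on the set where the shape operator is invertible one has $x=\langle x,\nu\rangle\,\nu$ and that part of $\Sigma$ lies on a sphere of radius $|\langle x,\nu\rangle|$; off that set the relative nullity distribution is parallel, and the splitting theorem, applied together with this remark to the non-flat factor, gives $\Sigma=S^k_r(0)\times\mathbb{R}^{n-k}$ (with $k=n$ a round sphere). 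This last step is standard in the subject, and I would quote it rather than reprove it.

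The main obstacle I anticipate is controlling the sign of $H$. The logarithmic trick needs $H>0$, or at least $H$ bounded below (so that $\log(H+\text{const})$ still satisfies the self-improving inequality); arguing that, under only the weak volume-growth hypothesis~(ii) and with no bound on $|A|$, the configurations in which $H$ is somewhere non-positive either cannot occur or can be brought back to the case $H>0$ is the genuinely delicate part. Everything else is the short computation above, the strong maximum principle, and the quoted constant-$|A|$ rigidity.
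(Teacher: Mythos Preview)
Your strategy is exactly the paper's---$\mathcal{L}H\le 0$ from~(i), a logarithmic self-improving inequality, and the cut-off estimate driven by~(ii)---and you have correctly isolated the one place where your argument is incomplete: the positivity needed to take a logarithm. The paper fills precisely that gap with a short observation you hint at but do not carry out. Namely, hypothesis~(i) alone forces $\inf_\Sigma H>-\infty$: if $H(p_j)\to-\infty$, then for large $j$ one has $H(p_j)<0$, and dividing~(i) by $H(p_j)$ (reversing the inequality) yields
\[
|A|^2(p_j)\Bigl(1-\tfrac{\lambda}{H(p_j)}\Bigr)\le\tfrac12,\qquad\text{hence}\qquad |A|^2(p_j)\le\tfrac12\Bigl(1-\tfrac{\lambda}{H(p_j)}\Bigr)^{-1}\longrightarrow\tfrac12,
\]
which bounds $|A|^2(p_j)$ and therefore $H^2(p_j)\le n|A|^2(p_j)$, a contradiction. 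Now set $C=\inf_\Sigma H$ and work with $\log(H-C)$ instead of $\log H$. Since $\mathcal{L}(H-C)=\mathcal{L}H\le0$, the strong maximum principle gives either $H\equiv C$ (constant, and you are done by the constant-$H$ rigidity) or $H-C>0$ everywhere; in the latter case your inequality $\mathcal{L}\log(H-C)\le-|\nabla\log(H-C)|^2$ holds verbatim, your cut-off argument forces $\nabla H\equiv0$, and this contradicts $H>C$. Thus $H$ is constant in every case, with no assumption on its sign.

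For the endgame once $H$ is constant, the paper does not use the $A(x^\top)=0$ splitting argument you sketch; it combines the Simons identity with the drifted evolution equation for $|A|$ to compute $|\nabla A|^2$ explicitly, shows it vanishes, and invokes Lawson's classification of isoparametric hypersurfaces. Either route is legitimate, but the paper's is self-contained given what is already proved there.
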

Note that for the case $H\geq 0$, the condition (i) from Theorem \ref{teointro2} is the reverse inequality from the condition (i) in Theorem \ref{teointro}.\\

This work is divided into three sections. In section \ref{s2}, we recall some notations, basic tools and key formulas for CWMC hypersurfaces. In section \ref{s3}, we prove the main results of this paper.
\section{Preliminaries}\label{s2}
In this section, we will establish some notations and recall some definitions and basic results.\\

 Let $\Sigma$ be a hypersurface embedded on $\mathbb{R}^{n+1}$, endowed with the metric g induced by the Euclidean metric. We will denote by $\nabla$, $\Delta$ and $d\sigma$, the connection, Laplacian  and volume form, respectively. The second fundamental form of $(\Sigma,g)$ at $p\in \Sigma$ is defined as 
\[
A(X,Y)= \langle\overline{\nabla}_XY,\nu\rangle,
\]
where $X,Y\in T_p\Sigma$ and $\nu$ is normal unit vector. We will denote by $h_{ij}= A(e_i,e_j)$, where $\lbrace e_i\rbrace$ is an orthonormal basis of $T_p\Sigma$. The mean curvature of $\Sigma$ is defined as $H=-\sum_i h_{ii}$ . Throughout this work we will denote $f=|x|^2/4$ and the drifted Laplacian by
\[
\mathcal{L}=\Delta -\frac{1}{2}\langle x,\nabla\rangle.
\]
This operator is self-adjoint over $L^2_f(\Sigma)$. More specifically, we have the following lemma which is a consequence of Stokes' theorem.
\begin{lema}\label{pl1} Let $\Sigma\subset\mathbb{R}^{n+1}$ be a smooth hypersurface, $u\in C^1_0(\Sigma)$ and $v\in C^2(\Sigma)$. Then 
\[
\int_\Sigma u\mathcal{L}ve^{-f}=-\int_\Sigma \langle \nabla u, \nabla v\rangle e^{-f}.
\]
\end{lema}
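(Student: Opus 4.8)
The plan is to recognize $\mathcal{L}$ as the Bakry--Émery (weighted) Laplacian associated to the Gaussian weight $e^{-f}$ and then to obtain the identity as a direct application of the divergence theorem on $\Sigma$. First I would rewrite the drift term intrinsically. Since $v\in C^2(\Sigma)$, its intrinsic gradient $\nabla v$ is tangent to $\Sigma$, so if $x=x^\top+\langle x,\nu\rangle\nu$ is the decomposition of the position vector into tangential and normal parts, then $\langle x,\nabla v\rangle=\langle x^\top,\nabla v\rangle$. On the other hand $f=|x|^2/4$ has ambient gradient $x/2$, whose tangential projection is precisely the intrinsic gradient $\nabla f=\tfrac12 x^\top$. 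Combining these gives $\tfrac12\langle x,\nabla v\rangle=\langle\nabla f,\nabla v\rangle$, so that $\mathcal{L}v=\Delta v-\langle\nabla f,\nabla v\rangle$.

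Next I would introduce the tangent vector field $W=u\,e^{-f}\,\nabla v$ on $\Sigma$; it is of class $C^1$ because $u\in C^1$, the weight is smooth and $\nabla v\in C^1$ (as $v\in C^2$), and it has compact support since $u\in C^1_0(\Sigma)$. Computing its divergence with the product rule, using $\divisor(\nabla v)=\Delta v$ and $\nabla(u\,e^{-f})=e^{-f}(\nabla u-u\nabla f)$, yields
\[
\divisor(W)=e^{-f}\langle\nabla u,\nabla v\rangle+u\,e^{-f}\bigl(\Delta v-\langle\nabla f,\nabla v\rangle\bigr)=e^{-f}\langle\nabla u,\nabla v\rangle+u\,e^{-f}\,\mathcal{L}v,
\]
where the last equality uses the identity from the first step.

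Finally, because $W$ is a $C^1$ vector field with compact support on the (boundaryless) hypersurface $\Sigma$, the divergence theorem gives $\int_\Sigma\divisor(W)\,d\sigma=0$ with no boundary contribution. Integrating the displayed pointwise identity against $d\sigma$ and rearranging then produces exactly
\[
\int_\Sigma u\,\mathcal{L}v\,e^{-f}=-\int_\Sigma\langle\nabla u,\nabla v\rangle\,e^{-f}.
\]
The only point requiring care is the vanishing of the boundary term: this is guaranteed by the hypothesis $u\in C^1_0(\Sigma)$, which confines the support of $W$ to a compact set even when $\Sigma$ is noncompact, so that Stokes' theorem applies without any decay assumption on $v$ or on the weight. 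This compact-support reduction is the main (and essentially only) obstacle; everything else is the product rule together with the tangential identity $\nabla f=\tfrac12 x^\top$.
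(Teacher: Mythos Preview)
Your argument is correct and is precisely the approach the paper indicates: it merely states that the lemma is ``a consequence of Stokes' theorem'' without further detail, and your computation of $\divisor(u\,e^{-f}\nabla v)$ followed by the divergence theorem on the compactly supported field is exactly how one makes that remark rigorous. There is nothing to add.
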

In the next section, we will need the following results:
\begin{lema}\label{l1}If $\Sigma\subset\mathbb{R}^{n+1}$ is a CWMC hypersurface then
\stepcounter{thm}
\begin{eqnarray}
\mathcal{L}(H-\lambda)+(H-\lambda)|A|^2= \frac{H}{2},\label{1}\\
\stepcounter{thm}
\mathcal{L}|A|+\left(|A|^2-\frac{1}{2}\right)|A|= \frac{|\nabla A|^2-|\nabla |A||^2}{|A|}-\frac{\lambda \text{tr}A^3}{|A|}.\label{2}
\end{eqnarray}
\end{lema}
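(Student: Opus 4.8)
The plan is to derive both identities by direct computation with the position vector $x$ and the second fundamental form, using only the CWMC equation $H=\tfrac12\langle x,\nu\rangle+\lambda$, the flatness of the ambient space (Codazzi symmetry of $\nabla h$ and the Gauss equation $R_{ijkl}=h_{ik}h_{jl}-h_{il}h_{jk}$), and the two elementary facts $\overline{\nabla}_{e_i}x=e_i$ and the Weingarten relation $\overline{\nabla}_{e_i}\nu=-\sum_j h_{ij}e_j$. Throughout I would fix a point and work in an orthonormal frame $\{e_i\}$ that is geodesic there, and set $w:=\langle x,\nu\rangle$, so the CWMC equation reads $w=2(H-\lambda)$ and hence $\nabla w=2\nabla H$.

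For (\ref{1}), I would first compute the tangential gradient $\nabla_i w=-\sum_j h_{ij}\langle x,e_j\rangle$, which follows at once from $\overline{\nabla}_{e_i}x=e_i$ and Weingarten. Differentiating once more and using $\nabla_i\langle x,e_j\rangle=\delta_{ij}+w\,h_{ij}$ (again from the same two facts), together with the Codazzi identity $\sum_i\nabla_i h_{ij}=\nabla_j(\mathrm{tr}\,h)=-\nabla_j H$ and $\sum_i h_{ii}=-H$, gives $\Delta w=\langle x,\nabla H\rangle+H-|A|^2 w$. Since $\nabla H=\tfrac12\nabla w$, the transport term satisfies $\langle x,\nabla H\rangle=\tfrac12\langle x,\nabla w\rangle$, so forming $\mathcal{L}w=\Delta w-\tfrac12\langle x,\nabla w\rangle$ cancels it and leaves $\mathcal{L}w=H-|A|^2w$. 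Substituting $w=2(H-\lambda)$ and dividing by $2$ yields $\mathcal{L}(H-\lambda)+(H-\lambda)|A|^2=\tfrac{H}{2}$, which is (\ref{1}).

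For (\ref{2}), the starting point is Simons' identity in flat ambient space, $\Delta h_{ij}=-\nabla_i\nabla_j H-H(h^2)_{ij}-|A|^2 h_{ij}$, with $(h^2)_{ij}=\sum_k h_{ik}h_{kj}$; I would obtain it by commuting covariant derivatives via the Ricci identity and inserting the Gauss equation, noting that the two cubic terms $(h^3)_{ij}$ produced cancel. The CWMC equation enters through the Hessian of $H$: differentiating $\nabla_j H=-\tfrac12\sum_k h_{jk}\langle x,e_k\rangle$ and using Codazzi gives $\nabla_i\nabla_j H=-\tfrac12\langle x,\nabla h_{ij}\rangle-\tfrac12 h_{ij}-(H-\lambda)(h^2)_{ij}$ after writing $\tfrac12 w=H-\lambda$. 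Plugging this into Simons', the term $-H(h^2)_{ij}$ combines with $(H-\lambda)(h^2)_{ij}$ to produce exactly $-\lambda(h^2)_{ij}$, while the transport term $\tfrac12\langle x,\nabla h_{ij}\rangle$ is precisely what is removed on passing to $\mathcal{L}h_{ij}=\Delta h_{ij}-\tfrac12\langle x,\nabla h_{ij}\rangle$. This yields the tensorial identity $\mathcal{L}h_{ij}=(\tfrac12-|A|^2)h_{ij}-\lambda(h^2)_{ij}$.

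To pass from the tensor to $|A|$, I would contract with $h_{ij}$ and use $\mathcal{L}|A|^2=2\sum_{ij}h_{ij}\mathcal{L}h_{ij}+2|\nabla A|^2$ together with $\sum_{ij}h_{ij}(h^2)_{ij}=\mathrm{tr}A^3$, obtaining $\mathcal{L}|A|^2=2(\tfrac12-|A|^2)|A|^2-2\lambda\,\mathrm{tr}A^3+2|\nabla A|^2$. Finally, at points where $|A|\neq0$ the chain rule gives $\mathcal{L}|A|^2=2|A|\mathcal{L}|A|+2|\nabla|A||^2$; equating the two expressions and dividing by $2|A|$ produces (\ref{2}). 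The main technical point to watch is the bookkeeping of the sign convention $H=-\sum h_{ii}$ throughout (so that $\mathrm{tr}\,h=-H$ and the Hessian-of-$H$ computation carries the correct signs), together with the cancellation of the cubic $(h^3)_{ij}$ terms in Simons' identity; a minor point is that (\ref{2}) is understood on the open locus $|A|>0$, where $|A|$ is smooth.
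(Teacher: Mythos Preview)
Your derivation is correct: the computation of $\mathcal{L}w$ via the Weingarten relation and Codazzi gives (\ref{1}), and the tensorial identity $\mathcal{L}h_{ij}=(\tfrac12-|A|^2)h_{ij}-\lambda(h^2)_{ij}$ obtained by feeding the CWMC Hessian of $H$ into Simons' formula, followed by contraction, gives (\ref{2}) on $\{|A|>0\}$. The paper does not actually prove this lemma; it simply refers the reader to \cite{guang2018gap}, where exactly the computation you outline is carried out. So your approach is the standard one underlying the citation, just made explicit here; the only thing to add is that your caveat about (\ref{2}) holding where $|A|>0$ is the correct reading, and is how the identity is used later in the paper.
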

\begin{proof}
For a proof, see \cite{guang2018gap}.
\end{proof}
\section{Rigidity theorems}\label{s3}
In this section, we will prove the main theorems of this paper, see some consequences and applications.\\

\noindent\textit{Proof of Theorem \ref{teointro}}
Let $\varphi\in C_0^\infty(\Sigma)$, from (\ref{1}) and using hypothesis (i) we have
\begin{eqnarray*}
\int_\Sigma \varphi^2|\nabla H|^2e^{-f}&=& -\int_\Sigma \varphi^2H\mathcal{L}He^{-f}-\int_\Sigma \langle \nabla\varphi^2, H\nabla H\rangle e^{-f}\\
									   &=& -\int_\Sigma \varphi^2\left(\frac{H^2}{2}-|A|^2H(H-\lambda)\right)e^{-f}-2\int_\Sigma \langle H\nabla\varphi, \varphi\nabla H\rangle e^{-f}\\
									   &\leq & \int_\Sigma \left(2H^2|\nabla \varphi|^2+\frac{1}{2}\varphi^2|\nabla H|^2\right)e^{-f}.					
\end{eqnarray*}
Therefore,
\[
\frac{1}{2}\int_\Sigma \varphi^2|\nabla H|^2e^{-f}\leq 2\int_\Sigma H^2|\nabla \varphi|^2e^{-f}.
\]
Choosing a sequence $\varphi_k\in C_0^\infty$ such that $\varphi_k(x)=1$ for $x\in B_k^\Sigma(p)$, $\varphi_k(x)=0$ for $x\in\Sigma\setminus B_{2k}^\Sigma(p)$ and $|\nabla \varphi_k|\leq 1/k$, by the monotone convergence theorem and hypothesis (ii), we get
\[
|\nabla H|=0, 
\]
which implies that $H$ is constant. If $H=0$ from equation (\ref{1}), we conclude that $\Sigma$ must be a hyperplane. Otherwise, $|A|^2=\frac{H}{2(H-\lambda)}$ and $|A|$ is also constant. From equation (\ref{2}) it follows that
\stepcounter{thm}
\begin{eqnarray}\label{16}
|\nabla A|^2=\left(|A|^2-\frac{1}{2}\right)|A|^2+\lambda \text{tr}A^3.
\end{eqnarray}
On the other hand, since the Simon's equation holds, that is
\[
\frac{1}{2}\Delta |A|^2= |\nabla A|^2-\langle A, \text{Hess}\ H\rangle -H\text{tr}A^3-|A|^4,
\]
it follows that
\stepcounter{thm}
\begin{eqnarray}\label{13}
|\nabla A|^2= H\text{tr}A^3+|A|^4.
\end{eqnarray}
Therefore, combining (\ref{16}) and (\ref{13}), we obtain
\[
\text{tr}A^3= \frac{|A|^2}{2(\lambda -H)}.
\]
From (\ref{13}) and the equation above, we have
\[
|\nabla A|^2=\frac{H|A|^2}{2(\lambda -H)}+|A|^4
\]
and since $|A|^2=\frac{H}{2(H-\lambda)}$, we conclude that
\[
|\nabla A|^2=0.
\]
By Lawson's theorem, $\Sigma$ must be a generalized cylinder.
\begin{flushright}
$\Box$
\end{flushright}
\rem\label{remthac}: It is possible to prove a more general result changing the hypothesis (ii) by\\
$\frac{1}{k^2}\int_{B^\Sigma_{2k}(p)\setminus B^\Sigma_k(p)} H^qe^{-f}\rightarrow 0$, when $k\rightarrow \infty$, for some even number $q\geq 2$ and a fixed point $p\in \Sigma$. The idea of the proof is to use a similar argument for $H^{q-1}\mathcal{L}H$.\\

A consequence of Theorem \ref{teointro} is the following.
\begin{cor}\label{corfinal2}Let $\Sigma\subset \mathbb{R}^{n+1}$ be a complete embedded CWMC hypersurface. If $\Sigma$ satisfies the following conditions:\\
(i)$|A|^2H(H-\lambda)\leq \frac{H^2}{2}$,\\
(ii) $\frac{1}{k^2}\int_{B^\Sigma_{2k}(p)\setminus B^\Sigma_k(p)} e^{-f}\rightarrow 0$, when $k\rightarrow \infty$, for a fixed point $p\in \Sigma$,\\
then $\Sigma$ is either a hyperplane or a generalized cylinder $S^k_r(0)\times \mathbb{R}^{n-k}$, $1\leq k \leq n$.
\end{cor}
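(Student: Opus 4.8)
The plan is to deduce Corollary \ref{corfinal2} from Theorem \ref{teointro} by checking its two hypotheses. Hypothesis (i) is word for word the same in the two statements, so everything hinges on showing that the weighted--volume condition (ii) of the corollary implies hypothesis (ii) of Theorem \ref{teointro}, i.e. $\frac{1}{k^2}\int_{B^\Sigma_{2k}(p)\setminus B^\Sigma_k(p)}H^2e^{-f}\to 0$. The link between the two is the pointwise bound coming directly from the defining equation $H=\tfrac12\langle x,\nu\rangle+\lambda$: from $(a+b)^2\le 2a^2+2b^2$ one gets $H^2\le \tfrac12|x|^2+2\lambda^2=2f+2\lambda^2$ on all of $\Sigma$. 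Since the $2\lambda^2$--contribution is controlled directly by hypothesis (ii), the whole matter reduces to proving
\[
\frac{1}{k^2}\int_{B^\Sigma_{2k}(p)\setminus B^\Sigma_k(p)} f\,e^{-f}\longrightarrow 0 .
\]

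To establish this I would proceed in two steps. First, upgrade hypothesis (ii) from annuli to balls: $\frac{1}{k^2}\int_{B^\Sigma_{2k}(p)\setminus B^\Sigma_k(p)}e^{-f}\to 0$ implies $\frac{1}{k^2}\int_{B^\Sigma_{2k}(p)}e^{-f}\to 0$. This is a routine dyadic estimate -- decompose $B^\Sigma_{2k}(p)\setminus B^\Sigma_{k_0}(p)$ into $\lesssim\log k$ dyadic annuli $B^\Sigma_{2^{j+1}}(p)\setminus B^\Sigma_{2^j}(p)$, bound the one at scale $2^j$ by $\varepsilon_j 4^j$ with $\varepsilon_j\to 0$, sum the geometric series (the leading term is $\lesssim(\sup_{j\ge\log_2 k_0}\varepsilon_j)\,k^2$), divide by $k^2$, and let first $k\to\infty$ and then $k_0\to\infty$. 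Second, control the first moment of the Gaussian weight using the CWMC structure: a computation of the same kind as in Lemma \ref{l1} gives
\[
\mathcal{L}f=\frac{n}{2}-f-\lambda(H-\lambda)
\]
(equivalently $\mathcal{L}|x|^2=2n-|x|^2-2\lambda\langle x,\nu\rangle$). Integrating this identity against $\varphi_k^2 e^{-f}$, with $\varphi_k$ the cutoff used in the proof of Theorem \ref{teointro}, and using Lemma \ref{pl1} together with $|\nabla f|=\tfrac12|x^{\top}|\le\sqrt f$, $|\lambda(H-\lambda)|=\tfrac{|\lambda|}{2}|\langle x,\nu\rangle|\le|\lambda|\sqrt f$, and Young's inequality to absorb the $\sqrt f$--terms (as $\tfrac18\int\varphi_k^2 fe^{-f}+\tfrac12\int\varphi_k^2 fe^{-f}$), one is left with an estimate of the form
\[
\frac{3}{8}\int_{B^\Sigma_k(p)} f e^{-f}\le\left(\frac{n}{2}+2\lambda^2\right)\int_{B^\Sigma_{2k}(p)}e^{-f}+\frac{2}{k^2}\int_{B^\Sigma_{2k}(p)\setminus B^\Sigma_k(p)}e^{-f}.
\]
Dividing by $k^2$ and letting $k\to\infty$, both terms on the right vanish (the first by the ball version of (ii) just proved, the second by (ii) itself), so $\frac{1}{k^2}\int_{B^\Sigma_k(p)}fe^{-f}\to 0$; applying this with $k$ replaced by $2k$ gives the displayed limit.

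Putting the pieces together, $\frac{1}{k^2}\int_{B^\Sigma_{2k}(p)\setminus B^\Sigma_k(p)}H^2 e^{-f}\le\frac{2}{k^2}\int_{B^\Sigma_{2k}(p)\setminus B^\Sigma_k(p)}f e^{-f}+\frac{2\lambda^2}{k^2}\int_{B^\Sigma_{2k}(p)\setminus B^\Sigma_k(p)}e^{-f}\to 0$, so hypothesis (ii) of Theorem \ref{teointro} is verified and that theorem gives the conclusion. The step I expect to be the real obstacle is exactly the moment bound $\frac{1}{k^2}\int fe^{-f}\to 0$: the crude pointwise inequality $f\le c\,k^2$ on $B^\Sigma_{2k}(p)$ only yields $\frac{1}{k^2}\int_{B^\Sigma_{2k}(p)\setminus B^\Sigma_k(p)}fe^{-f}\le c\int_{B^\Sigma_{2k}(p)\setminus B^\Sigma_k(p)}e^{-f}$, and the right--hand side need not tend to $0$ under hypothesis (ii) -- it is only $o(k^2)$ -- so one genuinely needs the drifted--Laplacian identity for $f$ (a Gaussian--type moment estimate) together with the dyadic annulus--to--ball passage.
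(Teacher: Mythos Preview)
Your argument is correct, but it takes a considerably longer road than the paper's. The paper never touches the moment $\int f e^{-f}$ or the annulus--to--ball upgrade; instead it extracts a \emph{pointwise} bound on $H$ directly from hypothesis (i). The observation is that if $H(p_j)\to+\infty$ along some sequence, then for large $j$ one has $H(p_j)(H(p_j)-\lambda)>0$, so (i) rearranges to $|A|^2(p_j)\le\frac12\bigl(1-\lambda/H(p_j)\bigr)^{-1}\to\frac12$, forcing $|A|^2(p_j)$ (hence $H^2(p_j)$) to stay bounded, a contradiction. The symmetric argument with $H(p_j)\to-\infty$ gives $\inf H>-\infty$, so $\sup H^2<\infty$, and then condition (ii) of Theorem~\ref{teointro} follows from condition (ii) of the corollary by the one--line estimate $\frac{1}{k^2}\int H^2 e^{-f}\le\frac{\sup H^2}{k^2}\int e^{-f}$.

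What each approach buys: the paper's route is much shorter and exploits hypothesis (i) twice (once for the bound on $H$, once inside Theorem~\ref{teointro}). Your route is independent of (i) at this stage---your implication ``(ii) of the corollary $\Rightarrow$ (ii) of Theorem~\ref{teointro}'' holds for \emph{every} CWMC hypersurface, not just those satisfying (i), which is a mildly stronger auxiliary fact. The drifted--Laplacian identity $\mathcal{L}f=\tfrac{n}{2}-f-\lambda(H-\lambda)$ and the dyadic passage are both correct as you outlined (the constants in your displayed inequality are slightly off, but harmlessly so). Still, for the purpose of proving this corollary the paper's two--line argument is preferable.
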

\begin{proof}
First, we will see that the condition (i) implies that $\sup_{x\in\Sigma}H<+\infty$. In fact, if\newline $\sup_{x\in\Sigma}H=+\infty$, then there exists a sequence $\lbrace p_j\rbrace_{j\in\mathbb{N}}$ in $\Sigma$ such that $H(p_j)\rightarrow +\infty$ when $j\rightarrow +\infty$. Therefore, for $j$ large enough $H(p_j)>0$ and $\frac{\lambda}{H(p_j)}<1$. From hypothesis (i), we have
\begin{eqnarray*}
|A|^2(p_j)\leq \frac{1}{2}\left(1-\frac{\lambda}{H(p_j)}\right)^{-1}.
\end{eqnarray*}
Since $\left(1-\frac{\lambda}{H(p_j)}\right)^{-1}\rightarrow 1$ when $j\rightarrow +\infty$, then the left side of the inequality above is bounded which implies that $|A|^2(p_j)$ is bounded. Therefore, $H^2(p_j)$ is also bounded, but this contradicts the assumption that $H(p_j)\rightarrow +\infty$.\\
For a fixed point $p\in \Sigma$, we have
\[
\frac{1}{k^2}\int_{B^\Sigma_{2k}(p)\setminus B^\Sigma_k(p)} H^2e^{-f}\leq \frac{\sup_{x\in\Sigma}H^2}{k^2}\int_{B^\Sigma_{2k}(p)\setminus B^\Sigma_k(p)} e^{-f}
\]
and when $k\rightarrow \infty$, by the hypothesis (ii) the left side of the inequality goes to zero and by Theorem \ref{teointro}, the result follows.
\end{proof}
Recently, Cheng-Wei \cite{cheng2018complete} proved that a CWMC hypersurface with constant mean curvature must be locally isometric to a generalized cylinder. Using the same argument that was used at the end of the proof of Theorem \ref{teointro}, it is possible to conclude the following corollary.
\begin{cor}\label{cor1}If $\Sigma\subset \mathbb{R}^{n+1}$ is a complete embedded CWMC hypersurface with constant mean curvature, then $\Sigma$ is either a hyperplane or a generalized cylinder $S^k_r(0)\times \mathbb{R}^{n-k}$, $1\leq k \leq n$.
\end{cor}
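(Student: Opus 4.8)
The plan is to notice that hypotheses (i) and (ii) of Theorem~\ref{teointro} are used in that proof \emph{only} to force $H$ to be constant; from the sentence ``which implies that $H$ is constant'' onward, the argument uses nothing beyond the constancy of $H$, the structure equations (\ref{1}) and (\ref{2}), Simons' identity, and Lawson's theorem. Since here $H$ is constant by hypothesis, I would simply re-run that endgame. It is worth stressing why one cannot merely quote Theorem~\ref{teointro}: constancy of $H$ does not by itself yield hypothesis (ii) of that theorem, because controlling $\frac{1}{k^2}\int_{B^\Sigma_{2k}(p)\setminus B^\Sigma_k(p)}e^{-f}$ requires a bound on the weighted volume of \emph{intrinsic} annuli which is not available a priori; this is precisely why the corollary must be argued directly.

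First, suppose $H\equiv 0$. Since $H$ is constant, $\mathcal{L}(H-\lambda)=0$, so (\ref{1}) collapses to $(H-\lambda)|A|^2=H/2=0$, i.e.\ $\lambda|A|^2\equiv 0$. If $\lambda\neq 0$ this gives $|A|\equiv 0$, so $\Sigma$ is totally geodesic, hence a hyperplane. If $\lambda=0$ then $\Sigma$ is a minimal self-shrinker and the CWMC equation reads $\langle x,\nu\rangle=2H\equiv 0$, so the position vector is everywhere tangent to $\Sigma$; thus $\Sigma$ is invariant under dilations, i.e.\ a cone, and the only complete smooth hypersurface of this form is a linear hyperplane.

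Next, suppose $H$ is a nonzero constant. Then $H\neq\lambda$ — otherwise (\ref{1}) would read $0=H/2$ — and $\mathcal{L}(H-\lambda)=0$ turns (\ref{1}) into $|A|^2=\frac{H}{2(H-\lambda)}$, a nonzero constant, so $|A|$ is constant as well. Substituting $|\nabla|A||=0$ and $\mathcal{L}|A|=0$ into (\ref{2}) reproduces $|\nabla A|^2=\big(|A|^2-\tfrac12\big)|A|^2+\lambda\,\text{tr}A^3$, and substituting the constancy of $|A|^2$ and of $H$ into Simons' identity reproduces $|\nabla A|^2=H\,\text{tr}A^3+|A|^4$; these are exactly (\ref{16}) and (\ref{13}). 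Subtracting gives $(\lambda-H)\,\text{tr}A^3=|A|^2/2$, hence $\text{tr}A^3=\frac{|A|^2}{2(\lambda-H)}$, and feeding this back into (\ref{13}) together with $|A|^2=\frac{H}{2(H-\lambda)}$ yields $|\nabla A|^2=0$. Thus $\Sigma$ has parallel second fundamental form, and by Lawson's theorem $\Sigma$ is a generalized cylinder $S^k_r(0)\times\mathbb{R}^{n-k}$, $1\leq k\leq n$.

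The computation in the nonzero case is routine bookkeeping, so I expect the only place needing genuine care to be the degenerate subcase $H\equiv 0$, $\lambda=0$, where (\ref{1}) carries no information and one must argue separately that a complete smooth minimal self-shrinker is a hyperplane. The cone argument above settles it; alternatively one can invoke Cheng--Wei's local classification of constant-mean-curvature CWMC hypersurfaces and then upgrade ``locally isometric to a generalized cylinder'' to the global statement using completeness, connectedness and embeddedness. I therefore anticipate this edge case, not the main identity chasing, to be the only real obstacle.
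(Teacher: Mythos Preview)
Your proposal is correct and follows essentially the same approach as the paper: the paper's proof observes that constant $H$ forces $\frac{H^2}{2}=|A|^2H(H-\lambda)$ via (\ref{1}) and then says ``the rest of the proof follows as in the proof of Theorem~\ref{teointro}'', which is exactly the endgame you reproduce. Your treatment is in fact more careful than the paper's on the degenerate subcase $H\equiv 0$, $\lambda=0$, where equation~(\ref{1}) is vacuous; the paper glosses over this, while your cone argument (or the appeal to Cheng--Wei) fills the gap cleanly.
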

\begin{proof}
If $H$ is constant, from equation (\ref{2}) we have
\[
\frac{H^2}{2}=|A|^2H(H-\lambda).
\]
The rest of the proof follows as in the proof of Theorem \ref{teointro}.
\end{proof}
\rem: Using the argument from the proof of Wei-Peng's theorem \cite{wei2019note}, one can also obtain the same result.\\

Note that in the proof of Theorem \ref{teointro}, one can obtain a condition which implies that $\Sigma$ must be a hyperplane. More specifically, the following holds.
\begin{cor}\label{l2}
Let $\Sigma\subset \mathbb{R}^{n+1}$ be a complete embedded CWMC hypersurface. If  the following properties hold:\\
(i) $0\leq H\leq \lambda$,\\
(ii) $\frac{1}{k^2}\int_{B^\Sigma_{2k}(p)\setminus B^\Sigma_k(p)} H^2e^{-f}\rightarrow 0$, when $k\rightarrow \infty$, for a fixed point $p\in \Sigma$,\\
then $\Sigma$ is a hyperplane.
\end{cor}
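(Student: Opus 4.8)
The plan is to reduce everything to the argument already performed in the proof of Theorem \ref{teointro}. First I would note that hypothesis (i), $0\le H\le\lambda$, gives $H-\lambda\le 0$, and since $|A|^2\ge 0$ and $H\ge 0$,
\[
|A|^2H(H-\lambda)\le 0\le\frac{H^2}{2},
\]
so condition (i) of Theorem \ref{teointro} is satisfied, while condition (ii) here is literally condition (ii) there. Hence the first part of that proof applies verbatim: testing $(\ref{1})$ against $\varphi_k^2H\,e^{-f}$ through Lemma \ref{pl1}, for the standard cut-offs $\varphi_k$ (with $\varphi_k\equiv 1$ on $B^\Sigma_k(p)$, $\varphi_k\equiv 0$ off $B^\Sigma_{2k}(p)$, $|\nabla\varphi_k|\le 1/k$), absorbing the cross term with Young's inequality and letting $k\to\infty$ using hypothesis (ii) and the monotone convergence theorem, one obtains $|\nabla H|\equiv 0$, i.e.\ $H$ is constant on $\Sigma$.

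The remaining point -- and this is exactly where the hypothesis $H\le\lambda$ enters, rather than merely $|A|^2H(H-\lambda)\le H^2/2$ -- is to exclude the generalized-cylinder alternative. With $H$ constant, $\mathcal L(H-\lambda)=0$ and $(\ref{1})$ collapses to $(H-\lambda)|A|^2=H/2$. If $H>0$, the right-hand side is strictly positive while, since $H\le\lambda$, the left-hand side is $\le 0$ (and $=0$ in the borderline case $H=\lambda$), a contradiction. Therefore $H\equiv 0$.

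Finally, with $H\equiv 0$, equation $(\ref{1})$ reads $\lambda|A|^2=0$. If $\lambda>0$ this forces $|A|\equiv 0$, so $\Sigma$ is totally geodesic, hence a hyperplane. If $\lambda=0$ then $\Sigma$ is a self-shrinker with $H\equiv 0$, so $\langle x,\nu\rangle\equiv 0$ and $\Sigma$ is a smooth complete cone with vertex at the origin, again a hyperplane. In either case $\Sigma$ is a hyperplane.

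I do not anticipate a serious obstacle: all of the analytic work is inherited from the proof of Theorem \ref{teointro}, and the only genuinely new ingredient is the short sign argument ruling out $H$ being a positive constant, together with a quick treatment of the degenerate value $\lambda=0$.
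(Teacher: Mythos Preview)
Your proof is correct and follows essentially the same route as the paper: both observe that $0\le H\le\lambda$ forces $|A|^2H(H-\lambda)\le 0\le H^2/2$, invoke the cut-off argument from the proof of Theorem~\ref{teointro} to get $H$ constant, then use the collapsed identity $(H-\lambda)|A|^2=H/2$ together with $H-\lambda\le 0$ to rule out $H>0$, concluding $H\equiv 0$ and hence $\Sigma$ is a hyperplane. The only cosmetic difference is that you spell out the $\lambda=0$ endgame via the cone argument, whereas the paper simply refers back to the $H=0$ case already handled in Theorem~\ref{teointro}.
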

\begin{proof}
Since $0\leq H\leq \lambda$, the following condition holds
\[
|A|^2H(H-\lambda)\leq 0\leq \frac{H^2}{2}.
\]
Moreover, from the proof of Theorem \ref{teointro} and hypothesis (i), we have
\begin{eqnarray*}
0=|A|^2H(H-\lambda)- \frac{H^2}{2}\leq -\frac{H^2}{2},
\end{eqnarray*}
which implies that $H=0$ and $\Sigma$ must be a hyperplane.
\end{proof}
Using the corollary above, it is possible to prove the second main theorem of this paper. Most of the proof of this result is in \cite{ancari2019volume}, but we will include the entire proof in order to keep this work self-contained.

\noindent\textit{Proof of Theorem \ref{AM19}}
For $\lambda\leq 0$, from Lemma \ref{l1} we have
\stepcounter{thm}
\begin{eqnarray}\label{15}
\mathcal{L}(H-\lambda)+(|A|^2-\frac{1}{2})(H-\lambda)=\frac{\lambda}{2}\leq 0.
\end{eqnarray}
Since $H-\lambda\geq 0$, by the maximum principle we can conclude that either $H-\lambda=0$ or $H-\lambda>0$. If $H-\lambda=0$, then from (\ref{15}) we conclude that $\lambda=0$, which implies that $\Sigma$ is a self-shrinker. Moreover, Colding-Minicozzi proved in \cite{colding2012generic} that a self-shrinker such that $H=0$ has to be a hyperplane. If $\lambda>0$ and $H-\lambda=0$ at some point $p\in \Sigma$, from hypothesis (ii)
\[
0\geq \lambda\left( \text{tr}A^3(H-\lambda)+\frac{|A|^2}{2}\right)=\frac{\lambda|A|^2}{2}
\]
at $p \in \Sigma$. This implies that $|A|(p)=0$, but this contradicts the fact that $H(p)>0$. Therefore, for $\lambda>0$, either $H-\lambda>0$ or $\lambda-H>0$. If $\lambda-H>0$, since $H^2\leq n|A|^2$, we have
\[
\frac{1}{k^2}\int_{B^\Sigma_{2k}(p)\setminus B^\Sigma_k(p)}H^2e^{-f}\leq \frac{n}{k^2}\int_{B^\Sigma_{2k}(p)\setminus B^\Sigma_k(p)}|A|^2.
\]
Therefore if $0\leq H< \lambda$, by Corollary \ref{l2} $\Sigma$ must be a hyperplane. To conclude the proof, we only need to study the case $H-\lambda>0$.\\
 
If $H-\lambda>0$, we will prove that either $|A|=0$ or $|A|=C(H-\lambda)$, for $C>0$. Consider the functions $u=H-\lambda$ and $v= \sqrt{|A|^2+\varepsilon}$. Computing $\mathcal{L}(u)$ and $\mathcal{L}(v)$, we get
\stepcounter{thm}
\begin{eqnarray}\label{7}
\mathcal{L}u+\left(|A|^2-\frac{1}{2}\right)u= \frac{\lambda}{2}
\end{eqnarray}
and
\[
\mathcal{L}v+\left(|A|^2-\frac{1}{2}\right)v= \frac{|\nabla A|^2-|\nabla v|^2}{v}+\left(|A|^2-\frac{1}{2}\right)\frac{\varepsilon}{v}-\frac{\lambda \text{tr}A^3}{v}.
\]
Since
\stepcounter{thm}
\begin{eqnarray}\label{8}
\frac{|\nabla A|^2-|\nabla v|^2}{v}\geq 0,\ \ \mathcal{L}v+\left(|A|^2-\frac{1}{2}\right)v\geq -\frac{\varepsilon}{2v}-\frac{\lambda \text{tr}A^3}{v}.
\end{eqnarray}
Let us consider $w=\frac{v}{u}$. Thus, from (\ref{7}) we get
\begin{eqnarray*}
\mathcal{L}v&=&w\mathcal{L}u +2\langle \nabla w, \nabla u\rangle +u\mathcal{L}w\\
			&=&w\left(\frac{\lambda}{2}-|A|^2+\frac{1}{2}\right)+2\langle \nabla w, \nabla u \rangle+u\mathcal{L}w.
\end{eqnarray*}
By (\ref{8}), we have
\[
u\mathcal{L}w\geq -\frac{1}{v}\left(\frac{\varepsilon}{2}+\lambda \text{tr}A^3\right)-\frac{\lambda w}{2}-2\langle \nabla w, \nabla u\rangle.
\]
Using hypothesis (ii), it is possible to conclude that
\stepcounter{thm}
\begin{eqnarray}\label{9}
\frac{u}{v}\left(-\frac{\varepsilon}{2}-\lambda \text{tr}A^3 \right)-\frac{v\lambda}{2}\geq -\frac{\varepsilon H}{2v}.
\end{eqnarray}
From the inequality above, we obtain
\stepcounter{thm}
\begin{eqnarray}\label{10}
\mathcal{L}w\geq -\frac{\varepsilon H}{2vu^2}-2\langle \nabla w, \nabla \log u\rangle.
\end{eqnarray}
For a function $\varphi\in C_0^\infty(\Sigma)$, using integration by parts and (\ref{10}), we get
\begin{eqnarray*}
\int_\Sigma \varphi^2|\nabla w|^2e^{-f}&=&-\int_\Sigma \varphi^2 w \mathcal{L}w e^{-f}-\int_\Sigma 2\varphi w\langle \nabla \varphi, \nabla w\rangle e^{-f}\\
									   &\leq & 2\int_\Sigma \varphi^2w\langle \nabla w, \nabla \log u\rangle e^{-f}+\frac{\varepsilon}{2}\int_\Sigma \frac{\varphi^2wH}{vu^2}e^{-f}-\int_\Sigma2\varphi w\langle \nabla \varphi,\nabla w\rangle e^{-f}\\
									   &=& 2\int_\Sigma \langle \varphi \nabla w, \varphi w \nabla \log u-w\nabla \varphi\rangle e^{-f}+\frac{\varepsilon}{2}\int_\Sigma \frac{\varphi^2H}{u^3}e^{-f}\\
									   &\leq & \frac{1}{2}\int_\Sigma \varphi^2 |\nabla w|^2 e^{-f}+2\int_{\Sigma} w^2|\varphi\nabla \log u-\nabla \varphi|^2 e^{-f}+\frac{\varepsilon}{2}\int_\Sigma \frac{\varphi^2H}{u^3}e^{-f}.
\end{eqnarray*}
Therefore,
\begin{eqnarray*}
\int_\Sigma \varphi^2|\nabla w|^2e^{-f}\leq 4 \int_\Sigma w^2|\varphi\nabla \log u-\nabla \varphi|^2e^{-f}+\varepsilon\int_\Sigma \frac{\varphi^2H}{u^3}e^{-f}.
\end{eqnarray*}
Choosing $\varphi=\psi u$, $\psi\in C_0^\infty(\Sigma)$, we have
\begin{eqnarray*}
\int_\Sigma \psi^2 u^2|\nabla w|^2 e^{-f}&\leq & 4 \int_\Sigma v^2 |\nabla \psi|^2e^{-f}+\varepsilon\int_\Sigma \psi^2 e^{-f}+\varepsilon\lambda\int_\Sigma \frac{\psi^2}{u}e^{-f}.
\end{eqnarray*}
For $\lambda\geq 0$, choosing $\varepsilon = 0$ we obtain
\begin{eqnarray*}
\int_\Sigma \psi^2 u^2|\nabla w|^2 e^{-f}&\leq & 4 \int_\Sigma v^2 |\nabla \psi|^2e^{-f}.
\end{eqnarray*}
Consider a sequence $\psi_k\in C_0^\infty(\Sigma)$, such that $\psi_k=1$ in $B_k^\Sigma(p)$, $\psi_k=0$ in $\Sigma\setminus B_{2k}^\Sigma(p)$ and $|\nabla \psi_k|\leq 1/k$ for every $k$, we have
\begin{eqnarray*}
\int_\Sigma \psi_k^2 u^2|\nabla w|^2 e^{-f}&\leq & 4 \int_{B_{2k}^\Sigma(p)\setminus B_k^\Sigma(p)} v^2 |\nabla \psi_k|^2e^{-f}\\
										   &\leq & \frac{4}{k^2} \int_{B_{2k}^\Sigma(p)\setminus B_k^\Sigma(p)} v^2 e^{-f}\\
										   &= & \frac{4}{k^2} \int_{B_{2k}^\Sigma(p)\setminus B_k^\Sigma(p)} |A|^2 e^{-f}.
\end{eqnarray*}
By the monotone convergence theorem and hypothesis (iii), we get
\[
\int_\Sigma u^2\left|\nabla \left(\frac{|A|}{H-\lambda}\right)\right|^2e^{-f}=0,
\]
which implies that $|A|=C(H-\lambda)$, for a constant $C>0$. 

For $\lambda<0$, we have 
\[
\int_\Sigma \psi^2 u^2|\nabla w|^2 e^{-f}\leq 4 \int_\Sigma v^2 |\nabla \psi|^2e^{-f}+\varepsilon\int_\Sigma \psi^2 e^{-f}.
\]
As in the other case, consider a sequence $\psi_k\in C_0^\infty(\Sigma)$, such that $\psi_k=1$ in $B_k^\Sigma(p)$, $\psi_k=0$ in $\Sigma\setminus B_{2k}^\Sigma(p)$ and $|\nabla \psi_k|\leq 1/k$ for every $k$, hence we get
\begin{eqnarray*}
\int_\Sigma \psi_k^2 u^2|\nabla w|^2 e^{-f}&\leq & 4 \int_\Sigma v^2 |\nabla \psi_k|^2e^{-f}+\varepsilon\int_\Sigma \psi_k^2 e^{-f}\\
										   &\leq & \frac{4}{k^2}\int_{B_{2k}^\Sigma(p)\setminus B_k^\Sigma(p)} |A|^2 e^{-f} +\frac{4\varepsilon}{k^2}\int_{B_{2k}^\Sigma(p)\setminus B_k^\Sigma(p)} e^{-f}+\varepsilon\int_{B_{2k}^\Sigma(p)} e^{-f}.
\end{eqnarray*}
Choosing $\varepsilon = \left(k\int_{B_{2k}^\Sigma(p)} e^{-f}\right)^{-1}$, we have
\begin{eqnarray*}
\int_\Sigma \psi_k^2 u^2|\nabla w|^2 e^{-f}\leq \frac{4}{k^2}\int_{B_{2k}^\Sigma(p)\setminus B_k^\Sigma(p)} |A|^2 e^{-f} +\frac{4}{k^3}+\frac{1}{k}.
\end{eqnarray*}
Hence, by hypothesis (iii) we obtain
\begin{eqnarray*}
\lim_{k\rightarrow \infty} \int_\Sigma \psi_k^2 u^2|\nabla w|^2 e^{-f}=0.
\end{eqnarray*}
If the set
\[
\mathcal{A}=\lbrace p\in \Sigma; |A|(p)=0\rbrace
\]
is not empty, consider $\mathcal{B}=\Sigma\setminus \mathcal{A}$. Since $\mathcal{B}$ is an open set, let $p\in \mathcal{B}$ and $B^\Sigma_r(p)\subset \mathcal{B}$. For k sufficiently large, $B^\Sigma_r(p)\subset \text{supp}\psi_k$ and $\psi_k=1$ in $B^\Sigma_r(p)$. Hence
\[
\lim_{k\rightarrow \infty} \int_{B^\Sigma_r(p)}  u^2|\nabla w|^2 e^{-f}=0.
\]
By the dominated convergence theorem, we conclude that $|A|/(H-\lambda)$ is constant in $B^\Sigma_r(p)$. Since p is arbitrary, it is possible to conclude that $|A|/(H-\lambda)$ is constant in $\mathcal{B}$. Since $\mathcal{A}\neq \emptyset$, using a continuity argument, we conclude that $|A|=0$. If $\mathcal{A}= \emptyset$, by the dominated convergence theorem
\[
\int_\Sigma u^2\left|\nabla \left(\frac{|A|}{H-\lambda}\right)\right|^2e^{-f}=0,
\]
which implies $|A|=C(H-\lambda)$ for a constant $C>0$. Hence, when $H-\lambda>0$, we conclude that either $|A|=0$ or $|A|=C(H-\lambda)$ for a constant $C>0$. If $|A|=0$, then $\Sigma$ is a hyperplane. Otherwise, since $|A|=C(H-\lambda)$
\begin{eqnarray*}
\mathcal{L}|A|&=&\frac{|A|}{H-\lambda}\mathcal{L}(H-\lambda)\nonumber\\
			  &=& \frac{|A|\lambda}{2(H-\lambda)} +\left(\frac{1}{2}-|A|^2\right)|A|.
\end{eqnarray*}
On the other hand
\begin{eqnarray*}
\mathcal{L}|A| = \left(\frac{1}{2}-|A|^2\right)|A| +\frac{|\nabla A|^2 - |\nabla |A||^2}{|A|}-\frac{\lambda \text{tr}A^3}{|A|}.
\end{eqnarray*}
Hence, from the equations above we have
\begin{eqnarray*}
\frac{|\nabla A|^2 - |\nabla |A||^2}{|A|}&=& \frac{|A|\lambda}{2(H-\lambda)}+\frac{\lambda \text{tr}A^3}{|A|}\\
										 &=& \frac{\lambda}{(H-\lambda)|A|}\left(\text{tr}A^3(H-\lambda)+\frac{|A|^2}{2}\right).
\end{eqnarray*}
Using the hypothesis (ii) and the equality above, we conclude that 
\stepcounter{thm}
\begin{eqnarray}\label{12}
|\nabla |A||=|\nabla A|.
\end{eqnarray}
Fixing $p\in\Sigma$ and $\lbrace E_i\rbrace_{1\leq i \leq n}$ a orthonormal basis for $T_p\Sigma$, (\ref{12}) implies that for each $k$ there exists a constant $C_k$ such that
\[
h_{ijk}= C_kh_{ij}
\]
for all $i,j$. Considering a base such that $h_{ij}=\lambda_i\delta_{ij}$, by the Codazzi equation, we have
\[
h_{ijk}=0
\]
unless $i=j=k$. If $\lambda_i\neq 0$ and $i\neq j$ then
\[
0=h_{iij}=C_j\lambda_i.
\]
It follows that $C_j=0$. Hence, if the rank of the matrix $(h_{ij})$ is at least two at p, then $\nabla A(p)=0$. To show that $\nabla A=0$, let us fix $q\in\Sigma$ and suppose that $\lambda_1(q)$ and $\lambda_2(q)$ are the largest eigenvalues of $(h_{ij})(q)$. Define the following set
\[
\Lambda=\{q\in\Sigma;\lambda_1(q)=\lambda_1(p),\lambda_2(q)=\lambda_2(p)\}.
\]
Using the continuity of the $\lambda_i's$, it is possible to prove that the set $\Lambda$ is open and closed. Since $p\in \Lambda$ and $\Sigma$ is connected, $\Lambda=\Sigma$. Therefore, $\nabla A=0$ everywhere on $\Sigma$. Hence, $\Sigma$ is a isoparametric hypersurface and by a theorem proved by Lawson in \cite{lawson1969local}, $\Sigma$ must be $S^k_r(0)\times \mathbb{R}^{n-k}$ with $2\leq k \leq n$.\\
If the rank of the matrix $(h_{ij})$ is one, then
\[
H^2=|A|^2=C^2(H-\lambda)^2.
\]
From this equation, $H$ must be constant. Moreover, from
\[
|\nabla A|=|\nabla|A||=C|\nabla H| = 0,
\]
we conclude that $\Sigma$ is isoparametric and by Lawson's result $\Sigma$ must be $S^1_r(0)\times\mathbb{R}^{n-1}$.
\begin{flushright}
$\Box$
\end{flushright}

Theorem \ref{AM19} is a generalization of Ancari-Miranda theorem, for the case when $\lambda\geq 0$. Therefore, an immediate consequence of these theorems is as follows.
\begin{cor} Let $\Sigma^n\subset \mathbb{R}^{n+1}$ be a complete embedded CWMC hypersurface and $\delta \in \lbrace 0,1 \rbrace$. If $\Sigma$ satisfies the following properties:\\
(i) $H-\delta\lambda\geq 0$,\\
(ii) $\lambda\left( \text{tr}A^3(H-\lambda)+\frac{|A|^2}{2}\right)\leq 0$,\\
(iii) $\frac{1}{k^2}\int_{B^\Sigma_{2k}(p)\setminus B^\Sigma_k(p)} |A|^2e^{-f}\rightarrow 0$, when $k\rightarrow \infty$, for a fixed point $p\in \Sigma$,\\
then $\Sigma$ is either a hyperplane or a generalized cylinder $S^k_r(0)\times \mathbb{R}^{n-k}$, $1\leq k \leq n$.
\end{cor}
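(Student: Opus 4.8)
\textit{Proof proposal.}
The plan is to reduce the statement to the two classification theorems already available in the excerpt by splitting on the two admissible values of $\delta$. If $\delta=0$, then hypothesis (i) reads $H\geq 0$, while hypotheses (ii) and (iii) are word for word the hypotheses of Theorem \ref{AM19}; so I would simply invoke Theorem \ref{AM19} to conclude that $\Sigma$ is a hyperplane or a generalized cylinder $S^k_r(0)\times\mathbb{R}^{n-k}$, $1\leq k\leq n$. If $\delta=1$, then hypothesis (i) becomes $H-\lambda\geq 0$, and hypotheses (ii), (iii) again coincide exactly with those of the Ancari--Miranda theorem \cite{ancari2019volume} recalled above, so the same conclusion follows directly from that result. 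Thus the entire argument is a two-line case distinction, quoting in each branch the theorem whose first hypothesis matches $H-\delta\lambda\geq 0$ on the nose.

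The only point that deserves a remark is the internal consistency of this packaging and the fact that the two branches genuinely cover all cases. When $\lambda\geq 0$ the constraint $H-\delta\lambda\geq 0$ is strongest for $\delta=1$, since it forces $H\geq\lambda\geq 0$ and hence $H\geq 0$; in that regime the $\delta=1$ case is therefore already contained in Theorem \ref{AM19}, consistently with the sentence preceding the corollary that Theorem \ref{AM19} generalizes the Ancari--Miranda theorem for $\lambda\geq 0$. When $\lambda<0$ the two values of $\delta$ give honestly different hypotheses on $H$, and it is precisely for this reason that one needs both Theorem \ref{AM19} and \cite{ancari2019volume} as inputs rather than a single one. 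In all situations hypotheses (ii) and (iii) are identical to those of the quoted theorems, so no additional estimate, integration by parts, or cut-off function argument is required.

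Consequently there is no real obstacle here: the corollary is a uniform restatement of Theorem \ref{AM19} (case $\delta=0$) and of the theorem of \cite{ancari2019volume} (case $\delta=1$) under one hypothesis indexed by $\delta\in\{0,1\}$. The only mildly delicate aspect is bookkeeping — checking that in each branch the hypothesis $H-\delta\lambda\geq 0$ is literally hypothesis (i) of the theorem being cited, and noting that the polynomial-volume-growth-type decay condition (iii) and the sign condition (ii) transfer unchanged — after which the conclusion is immediate in both cases.

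\medskip
\noindent\textit{(The actual proof is the two-case reduction just described.)}

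\begin{proof}
If $\delta=0$, hypothesis (i) is $H\geq 0$, and hypotheses (ii), (iii) are the hypotheses of Theorem \ref{AM19}; the conclusion follows from Theorem \ref{AM19}. If $\delta=1$, hypothesis (i) is $H-\lambda\geq 0$, and hypotheses (ii), (iii) are the hypotheses of the theorem in \cite{ancari2019volume}; the conclusion follows from that theorem.
\end{proof}
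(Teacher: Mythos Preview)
Your proposal is correct and matches the paper's own treatment: the paper presents this corollary as an immediate consequence of Theorem \ref{AM19} (the $\delta=0$ case) together with the Ancari--Miranda theorem from \cite{ancari2019volume} (the $\delta=1$ case), without writing out a separate proof. Your two-case reduction is exactly what is intended.
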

In \cite{cheng2015complete}, Cheng and Peng proved that a complete self-shrinker with $\inf H^2>0$ and constant norm of the second fundamental form must either be a sphere $S^n_{\sqrt{2n}}(0)$ or a cylinder $S^k_{\sqrt{2k}}(0)\times \mathbb{R}^{n-k}$, $1\leq k\leq n-1$. A consequence of the result above is the following result.
\begin{cor}Let $\Sigma\subset \mathbb{R}^{n+1}$ be a complete embedded self-shrinker with bounded norm of the second fundamental form. If $H\geq 0$ and
	\[
	\frac{1}{k^2}\int_{B^\Sigma_{2k}(p)\setminus B^\Sigma_k(p)} e^{-f}\rightarrow 0,
	\] 
	when $k\rightarrow \infty$, for a fixed point $p\in \Sigma$,
	then $\Sigma$ is either a hyperplane or a generalized cylinder $S^k_{\sqrt{2k}}(0)\times \mathbb{R}^{n-k}$, $1\leq k \leq n$.
\end{cor}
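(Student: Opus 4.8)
The plan is to deduce the statement directly from Theorem \ref{AM19} applied with $\lambda=0$, and then to identify the radius of the cylindrical factor by a short computation with the self-shrinker equation.

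First I would observe that a self-shrinker is precisely a CWMC hypersurface with $\lambda=0$, so it suffices to verify that the three hypotheses of Theorem \ref{AM19} hold in the present situation. Hypothesis (i), $H\geq 0$, is assumed. Hypothesis (ii) reads $\lambda\left(\mathrm{tr}A^3(H-\lambda)+\tfrac{|A|^2}{2}\right)\leq 0$, which is trivially an equality when $\lambda=0$. For hypothesis (iii), using that $|A|$ is bounded, say $|A|^2\leq C$ on $\Sigma$, we obtain
\[
\frac{1}{k^2}\int_{B^\Sigma_{2k}(p)\setminus B^\Sigma_k(p)}|A|^2e^{-f}\leq \frac{C}{k^2}\int_{B^\Sigma_{2k}(p)\setminus B^\Sigma_k(p)}e^{-f}\longrightarrow 0
\]
by the growth assumption in the statement. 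Hence Theorem \ref{AM19} applies and $\Sigma$ is either a hyperplane or a generalized cylinder $S^k_r(0)\times\mathbb{R}^{n-k}$, $1\leq k\leq n$.

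It then remains to show that $r=\sqrt{2k}$ in the cylindrical case. This I would do directly: on $S^k_r(0)\times\mathbb{R}^{n-k}$ the position vector splits as $x=x'+x''$ with $x'$ tangent to the spherical factor of length $r$ and $x''$ tangent to the Euclidean factor, so $\langle x,\nu\rangle=\langle x',\nu\rangle=r$ for the outward unit normal on the spherical factor, while the mean curvature of the cylinder is the constant $H=k/r$ with this choice of $\nu$ (consistently with the sign convention $H=-\sum_i h_{ii}$). Substituting into the self-shrinker equation $H=\langle x,\nu\rangle/2$ gives $k/r=r/2$, i.e.\ $r^2=2k$. Alternatively, one can invoke the classification recalled above (Cheng--Peng \cite{cheng2015complete}): on the cylinder $|A|$ is constant and $\inf H^2>0$, so that result immediately forces $r=\sqrt{2k}$.

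Since the proof is essentially a verification, there is no serious obstacle; the only point requiring any care is checking hypothesis (iii) of Theorem \ref{AM19}, which is exactly where the boundedness of $|A|$ is used to convert the assumed $e^{-f}$-growth bound into the required $|A|^2e^{-f}$-growth bound, and then the elementary computation fixing the radius.
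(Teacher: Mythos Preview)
Your proposal is correct and matches the paper's intended argument: the corollary is stated there without proof as an immediate consequence of the preceding result (equivalently, Theorem~\ref{AM19} with $\lambda=0$), and you have spelled out exactly that verification, using the bound on $|A|$ to pass from the $e^{-f}$ growth condition to the $|A|^2e^{-f}$ one. The radius computation you add is standard and consistent with the paper's sign conventions.
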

In the following, we prove Theorem \ref{teointro2} which is the third main result of this paper.\\

\noindent\textit{Proof of Theorem \ref{teointro2}}
First, we shall prove that the condition (i) implies that $\inf_{x\in\Sigma}H>-\infty$. In fact, if $\inf_{x\in\Sigma}H=-\infty$, then there exists a sequence $\lbrace p_j\rbrace_{j\in\mathbb{N}}$ in $\Sigma$ such that $H(p_j)\rightarrow -\infty$ when $j\rightarrow +\infty$. Therefore, for $j$ large enough $H(p_j)<0$ and $\frac{\lambda}{H(p_j)}<1$. From hypothesis (i), we have
\begin{eqnarray*}
|A|^2(p_j)\leq \frac{1}{2}\left(1-\frac{\lambda}{H(p_j)}\right)^{-1}.
\end{eqnarray*}
Since $\left(1-\frac{\lambda}{H(p_j)}\right)^{-1}\rightarrow 1$ when $j\rightarrow +\infty$, then the left side of the inequality above is bounded which implies that $|A|^2(p_j)$ is bounded. Therefore, $H^2(p_j)$ is also bounded, but this contradicts the assumption that $H(p_j)\rightarrow -\infty$.\\
Since $\inf_{x\in\Sigma}H>-\infty$, let us fix $C=\inf_{x\in\Sigma}H$ and from hypothesis (i), we have
\[
\mathcal{L}(H-C)=\frac{H}{2}+(\lambda-H)|A|^2\leq 0.
\]
By the maximum principle either $H-C>0$ or $H-C= 0$. If $H=C$, by Corollary \ref{cor1} $\Sigma$ is either a hyperplane or a generalized cylinder.\\
If $H-C>0$, computing $\Delta\log(H-C)$ we get
\begin{eqnarray*}
\Delta\log(H-C)&=&\text{div}(\nabla\log(H-C))\\
							&=&\text{div}\left(\frac{\nabla(H-C)}{H-C}\right)\\
							&=&\frac{1}{H-C}\Delta(H-C)-|\nabla\log(H-C)|^2.
\end{eqnarray*}
Therefore by hypothesis (ii), we obtain
\stepcounter{thm}
\begin{eqnarray*}
\mathcal{L}\log(H-C)&=&\frac{1}{H-C}\mathcal{L}(H-C)-|\nabla\log(H-C)|^2\\
							&=& \frac{\lambda-H}{H-C}|A|^2+\frac{H}{2(H-C)}-|\nabla\log(H-C)|^2\\
							&\leq & -|\nabla\log(H-C)|^2.
\end{eqnarray*}
Using integration by parts, for $\varphi\in C_0^\infty(\Sigma)$ we have
\begin{eqnarray*}
\int_\Sigma \varphi^2|\nabla\log(H-C)|^2e^{-f}&\leq &-\int_\Sigma \varphi^2\mathcal{L}\log(H-C)e^{-f}\\
													&=&\int_\Sigma \langle \nabla\log(H-C),\nabla \varphi^2\rangle e^{-f}\\
													&=&2\int_\Sigma\langle \varphi\nabla\log(H-C),\nabla \varphi\rangle e^{-f}\\
													&\leq & \int_\Sigma \left(\frac{1}{2}\varphi^2|\nabla\log(H-C)|^2+2|\nabla \varphi|^2\right) e^{-f}.
\end{eqnarray*}
Therefore
\[
\frac{1}{2}\int_\Sigma \varphi^2|\nabla\log(H-C)|^2e^{-f}\leq 2\int_\Sigma |\nabla \varphi|^2 e^{-f}.
\]
By hypothesis (iii), choosing a sequence as before and using the monotone convergence theorem, we conclude that
\[
|\nabla\log(H-C)|^2=0
\]
which implies that $H$ is constant, but this contradicts the assumption that $H>\inf_{x\in\Sigma} H$.
\begin{flushright}
$\Box$
\end{flushright}
An immediate consequence of Theorem \ref{teointro2} is the following.
\begin{cor}Let $\Sigma\subset \mathbb{R}^{n+1}$ be a complete embedded CWMC hypersurface with polynomial volume growth. If
\[
|A|^2(H-\lambda)\geq \frac{H}{2}
\]
then $\Sigma$ is either a hyperplane or a generalized cylinder $S^k_r(0)\times \mathbb{R}^{n-k}$, $1\leq k \leq n$.
\end{cor}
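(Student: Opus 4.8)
The plan is to obtain this statement directly from Theorem~\ref{teointro2}. Hypothesis (i) of the corollary is literally hypothesis (i) of that theorem, so everything reduces to checking that the polynomial volume growth assumption implies hypothesis (ii) of Theorem~\ref{teointro2}, i.e.
\[
\frac{1}{k^2}\int_{B^\Sigma_{2k}(p)\setminus B^\Sigma_k(p)} e^{-f}\longrightarrow 0\quad\text{as }k\to\infty .
\]
In fact I would prove the cleaner fact that $\int_\Sigma e^{-f}<\infty$; once this is in hand, hypothesis (ii) is automatic, since the integral over the annulus is at most $\int_\Sigma e^{-f}$ while one still divides by $k^2\to\infty$. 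Note it is irrelevant for this reduction whether $B^\Sigma_k(p)$ denotes an intrinsic or an extrinsic ball, because we only ever bound an integral over a subset of $\Sigma$ by the integral over all of $\Sigma$.

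To see that $\int_\Sigma e^{-f}<\infty$, I would decompose $\Sigma$ into the extrinsic shells $\Sigma_j=\{x\in\Sigma:\ j\le|x|<j+1\}$, $j\in\mathbb{N}$. On $\Sigma_j$ one has $f=|x|^2/4\ge j^2/4$, so $e^{-f}\le e^{-j^2/4}$ there, while the polynomial volume growth hypothesis provides $C>0$, $\alpha>0$, $r_0>0$ with $V(\Sigma_j)\le V(B_{j+1}(0)\cap\Sigma)\le C(j+1)^\alpha$ for every $j$ with $j+1\ge r_0$. Treating the finitely many shells with $j+1<r_0$ as a single finite contribution (bounded by $V(B_{\lceil r_0\rceil}(0)\cap\Sigma)$, which is finite by the growth bound applied at radius $\lceil r_0\rceil$) and summing the rest gives
\[
\int_\Sigma e^{-f}\ \le\ V\!\left(B_{\lceil r_0\rceil}(0)\cap\Sigma\right)\;+\;C\sum_{j\ge 0}(j+1)^\alpha e^{-j^2/4},
\]
and the series converges because $e^{-j^2/4}$ decays faster than any power of $j$. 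Hence, for any fixed $p\in\Sigma$,
\[
\frac{1}{k^2}\int_{B^\Sigma_{2k}(p)\setminus B^\Sigma_k(p)} e^{-f}\ \le\ \frac{1}{k^2}\int_\Sigma e^{-f}\ \longrightarrow\ 0,
\]
so hypothesis (ii) of Theorem~\ref{teointro2} is satisfied and the conclusion follows.

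There is essentially no obstacle in this argument: the only step deserving a word of justification is the finiteness $\int_\Sigma e^{-f}<\infty$, which is the familiar observation that the polynomial growth of the $n$-volume dominates the Gaussian weight — the same fact already used in the remark following Theorem~\ref{teointro}. Everything else is bookkeeping, and the corollary is thus an immediate specialization of Theorem~\ref{teointro2}.
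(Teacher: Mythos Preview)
Your proposal is correct and matches the paper's approach: the corollary is stated as an immediate consequence of Theorem~\ref{teointro2}, with the passage from polynomial volume growth to hypothesis (ii) already flagged in the remark following Theorem~\ref{teointro}. You have simply supplied the standard justification (finiteness of the Gaussian-weighted volume via a shell decomposition) that the paper leaves implicit.
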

Another consequence of Theorem \ref{teointro2} is as follows.
\begin{cor}Let $\Sigma\subset \mathbb{R}^{n+1}$ be a complete embedded CWMC hypersurface. If the following conditions are satisfied:\\
(i) $H\geq 0$,\\
(ii) $\frac{H}{2}+\lambda|A|^2\leq 0$,\\
(iii) $\frac{1}{k^2}\int_{B^\Sigma_{2k}(p)\setminus B^\Sigma_k(p)} e^{-f}\rightarrow 0$, when $k\rightarrow \infty$, for a fixed point $p\in \Sigma$,\\
then $\Sigma$ is a hyperplane.
\end{cor}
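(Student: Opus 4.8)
The plan is to deduce this corollary directly from Theorem \ref{teointro2} and then eliminate the cylinder alternative using hypothesis (ii). The reduction step is elementary: hypothesis (ii), $\frac{H}{2}+\lambda|A|^2\le 0$, is the same as $-\lambda|A|^2\ge \frac{H}{2}$, while hypothesis (i) gives $H|A|^2\ge 0$. Adding these,
\[
|A|^2(H-\lambda)=H|A|^2+(-\lambda|A|^2)\ge 0+\frac{H}{2}=\frac{H}{2},
\]
so condition (i) of Theorem \ref{teointro2} holds pointwise on all of $\Sigma$. Since condition (iii) of the present statement is literally condition (ii) of Theorem \ref{teointro2}, that theorem applies and yields that $\Sigma$ is either a hyperplane or a generalized cylinder $S^k_r(0)\times\mathbb{R}^{n-k}$.

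It then remains to rule out the cylinder. Suppose $\Sigma=S^k_r(0)\times\mathbb{R}^{n-k}$. Then $H$ is a nonzero constant, and hypothesis (i) forces $H>0$. Because $H$ and $\lambda$ are constant, $\mathcal{L}(H-\lambda)=0$, so equation (\ref{1}) reduces to $(H-\lambda)|A|^2=\frac{H}{2}>0$; since $|A|^2>0$ on a cylinder this forces $H-\lambda>0$ and $|A|^2=\frac{H}{2(H-\lambda)}$. Substituting into the left-hand side of hypothesis (ii),
\[
\frac{H}{2}+\lambda|A|^2=\frac{H}{2}+\frac{\lambda H}{2(H-\lambda)}=\frac{H^2}{2(H-\lambda)}>0,
\]
which contradicts (ii). Hence the cylinder case cannot occur, and $\Sigma$ must be a hyperplane.

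I do not expect a genuine obstacle here; the one point requiring a little care is the cylinder exclusion, where one must use that on $S^k_r(0)\times\mathbb{R}^{n-k}$ the mean curvature is a nonzero constant whose sign is pinned down by the CWMC condition — this is precisely where hypothesis (i) is used, since a cylinder oriented the other way would already violate (i). Everything else is the one-line algebraic verification of the hypotheses of Theorem \ref{teointro2}.
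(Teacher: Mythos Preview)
Your proof is correct and follows exactly the route the paper intends: the corollary is stated without proof as an immediate consequence of Theorem~\ref{teointro2}, and your verification that hypotheses (i)--(ii) imply $|A|^2(H-\lambda)\ge H/2$, followed by the elimination of the cylinder via $\frac{H}{2}+\lambda|A|^2=\frac{H^2}{2(H-\lambda)}>0$, is precisely the natural argument.
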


For self-shrinkers, we obtain the following result.
\begin{cor}\label{corfinal}Let $\Sigma\subset \mathbb{R}^{n+1}$ be a complete embedded self-shrinker. If the following conditions are satisfied:\\
(i) $H\left(|A|^2- \frac{1}{2}\right)\geq 0$,\\
(ii)$\frac{1}{k^2}\int_{B^\Sigma_{2k}(p)\setminus B^\Sigma_k(p)} e^{-f}\rightarrow 0$, when $k\rightarrow \infty$, for a fixed point $p\in \Sigma$,\\
then $\Sigma$ is either a hyperplane or a generalized cylinder $S^k_{\sqrt{2k}}(0)\times \mathbb{R}^{n-k}$, $1\leq k \leq n$.
\end{cor}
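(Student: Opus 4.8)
The plan is to deduce this directly from Theorem \ref{teointro2}. Since $\Sigma$ is a self-shrinker, it is a CWMC hypersurface with $\lambda=0$; hence hypothesis (i) of the corollary, $H\left(|A|^2-\tfrac12\right)\geq 0$, is literally the inequality $|A|^2(H-\lambda)\geq \tfrac{H}{2}$, i.e. hypothesis (i) of Theorem \ref{teointro2}. Hypothesis (ii) of the corollary is verbatim hypothesis (ii) of Theorem \ref{teointro2}. Therefore Theorem \ref{teointro2} applies and yields that $\Sigma$ is either a hyperplane or a generalized cylinder $S^k_r(0)\times\mathbb{R}^{n-k}$ for some $1\leq k\leq n$ and some $r>0$.

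It then remains only to pin down the radius $r$ in the cylindrical case, and for this I would use that $\Sigma=S^k_r(0)\times\mathbb{R}^{n-k}$ must itself satisfy the self-shrinker equation $H=\langle x,\nu\rangle/2$. On such a hypersurface the second fundamental form is that of the spherical factor, so $|A|^2=k/r^2$ is constant and, with the sign convention $H=-\sum_i h_{ii}$ together with the choice of $\nu$ pointing toward the axis, one has $H=k/r$; moreover the part of the position vector normal to $\Sigma$ is exactly the spherical one, so $\langle x,\nu\rangle=r$. Substituting into $H=\langle x,\nu\rangle/2$ gives $k/r=r/2$, i.e. $r^2=2k$, hence $r=\sqrt{2k}$ and $\Sigma=S^k_{\sqrt{2k}}(0)\times\mathbb{R}^{n-k}$, as claimed.

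Since every step is a substitution into an already-proven theorem followed by the standard computation of $H$ and $\langle x,\nu\rangle$ on a round cylinder, there is no genuine obstacle here; the only point that requires a little care is the orientation bookkeeping distinguishing $H=k/r$ from $H=-k/r$, which is settled by fixing $\nu$ consistently (the convention under which round spheres have positive mean curvature), and one should also note that the endpoint $k=n$ recovers the shrinking sphere $S^n_{\sqrt{2n}}(0)$.
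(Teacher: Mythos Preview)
Your proposal is correct and matches the paper's approach: the corollary is stated in the paper immediately after Theorem \ref{teointro2} with no separate proof, as it is meant to be the direct specialization to $\lambda=0$ that you describe. Your additional computation fixing $r=\sqrt{2k}$ from the self-shrinker equation is the standard one and fills in the only detail the paper leaves implicit.
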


\textbf{Acknowledgements:} We wish to express our gratitude to professor Xu Cheng for her support and useful suggestions. We thank professor Detang Zhou for his constant encouragement and motivation throughout this work. We also want to thank professor Thac Dung for his comment on Remark \ref{remthac}. 

\bibliographystyle{amsalpha}
\bibliography{Biblio}
\end{document}